\newcommand{\norm}[1]{\lVert#1\rVert}
\newcommand{\R}{\mathbb R}
\newcommand{\N}{\mathbb N}
\newcommand{\bp}{\mathbf p}
\newcommand{\br}{\mathbf r}
\newcommand{\bs}{\mathbf s}
\newcommand{\bu}{\mathbf u}
\newcommand{\bv}{\mathbf v}
\newcommand{\calo}{\mathcal O}
\newcommand{\wh}{\widehat}
\newcommand{\wt}{\widetilde}
\DeclareMathOperator*{\argmax}{\arg\!\max}
\DeclareMathSymbol{\mrq}{\mathord}{operators}{`'}
\DeclareMathSymbol{\mlq}{\mathord}{operators}{``}
\newcommand{\BlackBox}{\rule{1.5ex}{1.5ex}}  
    \renewenvironment{proof}{\par\noindent{\bf Proof\ }}{\hfill\BlackBox\\[2mm]}
\newtheorem{theorem}{Theorem}[section]
\newtheorem{example}[theorem]{Example}
\newtheorem{proposition}[theorem]{Proposition}
\newtheorem{experiment}[theorem]{Experiment}
\crefname{experiment}{Experiment}{Experiments}
\crefname{example}{Example}{Examples}
\crefname{proposition}{Proposition}{Propositions}
\crefname{definition}{Definition}{Definitions}
\crefname{theorem}{Theorem}{Theorems}
\crefname{lemma}{Lemma}{Lemmas}
 \pgfplotsset{compat=newest,legend style={font=\scriptsize,row sep=-0.05cm,/tikz/every odd column/.append style={column sep=0.01cm}}} 
 \newcommand{\exponent}{2}
\newlength\figureheight
\newlength\figurewidth
\newcommand{\smtxa}[2]{
{\mbox{\scriptsize
$\left[\!\!
\begin{array}{#1}
#2
\end{array} \!\! \right]$}}}
\definecolor{orange}{rgb}{1.0,0.4,0}
\journal{xxxxx}
\begin{document}

\begin{frontmatter}


\author{Perfect~Y.~Gidisu\corref{cor1}}
\ead{p.gidisu@tue.nl}
\cortext[cor1]{Corresponding author}

\title{Block Discrete Empirical Interpolation Methods\tnoteref{label1}}

\author{Michiel E.~Hochstenbach}
\ead{m.e.hochstenbach@tue.nl}

\begin{abstract}%
We present block variants of the discrete empirical interpolation method (DEIM); as a particular application, we will consider a CUR factorization. The block DEIM algorithms are based on the concept of the maximum volume of submatrices and a rank-revealing QR factorization. We also present a version of the block DEIM procedures, which allows for adaptive choice of block size. The results of the experiments indicate that the block DEIM algorithms exhibit comparable accuracy for low-rank matrix approximation compared to the standard DEIM procedure. However, the block DEIM algorithms also demonstrate potential computational advantages, showcasing increased efficiency in terms of computational time.
\end{abstract}

\begin{keyword}
Block DEIM, MaxVol, CUR decomposition, rank-revealing QR factorization, low-rank approximation
\end{keyword}
\end{frontmatter}

\section{Introduction}\label{sec:intro}
A CUR decomposition approximates a data matrix using a subset of its rows and columns. Such factorization preserves in the reduced matrix, properties such as interpretability, sparsity, and nonnegativity of the original data matrix. In machine learning, one can use a CUR decomposition as an unsupervised feature or sample selection technique. Given an $m\times n$ matrix $A$ and a target rank $k$, a CUR factorization takes the form (In line with \cite{Str19}, we will use the letter $M$ rather than $U$ for the middle matrix)
\begin{equation*}
\begin{array}{ccccc}
A&\approx & C &M& R~, \\
 m\times n& & m\times k &k\times k&k\times n
\end{array}
\end{equation*}
where $C$ and $R$ (both of full rank) are small subsets of the columns and rows of $A$, respectively. The selected columns and rows tend to capture the most important information of $A$.  {The methods employed for constructing a CUR decomposition can be broadly classified into two groups: randomized and deterministic algorithms. Randomized methods often exhibit lower computational complexity than their deterministic counterparts, making them more suitable for handling large-scale matrices. However, these methods typically require oversampling of both columns and rows beyond the specified target rank $k$ to achieve strong provable approximation guarantees \cite{optimalboutsidis,deshpande2006matrix,Drineas,frieze2004fast,guruswami2012optimal}. Consequently, they may not be the preferred choice when fixing a rank parameter $k$ and choosing exactly $k$ columns and rows is desired.  

In this paper, our discussions will be limited to deterministic algorithms, which utilize the singular value decomposition (SVD) or rank-revealing QR factorizations  \cite{bischof1991structure,chandrasekaran1994,gu1996,Voronin} and the selection of exactly $k$ columns and rows.}

One standard deterministic approach for constructing the factors $C$ and $R$ of a CUR factorization is to apply the discrete empirical interpolation index selection method (DEIM) to the $k$ dominant right and left singular vectors of $A$, respectively \cite{Sorensen}. Given the factors $C$ and $R$, following \citeauthor{Sorensen} \cite{Sorensen} and others \citep{Mahoney,Stewart}, the full-rank middle matrix $M$ can be computed as $(C^T C)^{-1}C^T A R^T(R R^T )^{-1}$. 

In this work, we propose different block DEIM procedures. There are three motivating factors for a block DEIM scheme:
\begin{itemize}
\item The DEIM algorithm may be considered a {\em greedy} algorithm for finding a submatrix with the maximum absolute determinant in a thin-tall matrix: it locally selects the index corresponding to the largest magnitude element of a vector.
A block DEIM method shares the same principle but may be less greedy since the optimization is done over more indices instead of just one.
\item The proposed block DEIM procedures will generally have higher flop counts than the standard DEIM algorithm (see \cref{tab:timeoverview}). However, in practice, we expect the block DEIM schemes to have higher flop performance than the classical DEIM algorithm as they are mainly based on level 3 {\sf BLAS} building blocks, which perform matrix-matrix operations.
\item A block DEIM scheme may be a good solution where the DEIM procedure faces a difficult choice when the local maximizer is (nearly) nonunique. Given the basis vectors $\bv_i$ for $i=1, \dots, k$, indeed, in \cite[Footnote~3]{Sorensen} it is already hinted that a potentially problematic case
is if multiple entries in a vector being considered have nearly the same magnitude, e.g., $|(\bv_1)_\ell| \approx |(\bv_1)_j|$ for $\ell \ne j$, then the DEIM scheme may sometimes make a relatively arbitrary choice. However, in cases where the nonselected large-magnitude entries in $\bv_1$ are not influential in the subsequent $\bv_i$ vectors, their corresponding indices may never be picked, despite their equal importance; we will give an illustrative case in \cref{example1}).
\end{itemize}

Our selection criterion for the block-$b$ case will consist of maximizing the modulus of the determinant (volume) over many possible $b \times b$ submatrices. The choice of a typical block size depends largely on the size of the input matrix. Given $U\in\R^{m\times k}$,  if $m$ and $k$ are relatively small, a smaller block size may suffice. For larger matrices, a larger block size can be more efficient. One can choose any value for $b$ as long as it is within the range $2\le b < k$. We also discuss the option of taking adaptive values of $b$. 

Some main properties and time complexities of the methods are summarized in \cref{tab:overview,tab:timeoverview}. Since we use the {\sf MaxVol} algorithm (see \cref{sec:review})  and column-pivoted QR decomposition in the block DEIM variants, the procedures have $\calo((m+n)bk)$ more floating point operations than the standard DEIM.  

\begin{table}[htb!] 
\footnotesize \centering
\caption{Overview of the various DEIM variants with their properties. 
All methods require the (approximate) SVD as an input. The speed provides a general indication of performance potential and may vary depending on, for instance, the programming language. \label{tab:overview}}
\scriptsize
\begin{tabular}{l|ccccccc} \hline \rule{0pt}{3ex}%
Method \ $\backslash$ Properties & SVD & Block & Adapt. & Tunable & Greediness & Speed \\[0.5mm] \hline \rule{0pt}{2.3ex}%
Standard DEIM & $+$ &$-$ & $-$ &$-$& High  &Moderate \\
B-DEIM-MaxVol &$+$& $+$ & $-$ &$+$ & Low  & Fast \\
B-DEIM-RRQR &$+$ & $+$ & $-$ &$+$ & Low  & Fast \\
AdapBlock-DEIM &$+$ & $+$ & $+$ &$+$ & Medium  & Variable \\ \hline
\end{tabular}

\end{table}

\begin{table}[htb!]
\footnotesize \centering 
\caption{Summary of the dominant work (time complexity) of the various algorithms for selecting $k$ columns and rows of an $m\times n$ matrix. The notation $\kappa$ denotes the number of iterations needed in the {\sf MaxVol} algorithm.
\label{tab:timeoverview}}
\footnotesize
\begin{tabular}{l|c}\hline \rule{0pt}{2.5ex}%
Method     &  Time complexity\\[0.5mm] \hline \rule{0pt}{2.3ex}%
DEIM     & $\calo(m+n)k^2$\\
{\sf MaxVol}& $\calo(\kappa(m+n)k^2)$\\
B-DEIM-RRQR&$\calo((m+n)(bk +k^2))$\\
B-DEIM-MaxVol&$\calo((m+n)(\kappa bk +k^2))$\\ \hline
\end{tabular}
\end{table}

The outline of the paper is as follows. \Cref{sec:review} provides a brief literature review of some existing deterministic CUR approximation algorithms. \Cref{sec:block} describes how the DEIM scheme can be blocked using either a column-pivoted QR factorization or the concept of the maximum 
 absolute determinant (volume) of submatrices. We discuss the newly proposed block DEIM algorithms and their computational complexities and state a well-known error bound for the approximation. \Cref{sec:exper} reports the results from numerical experiments evaluating the computational efficiency and approximation quality of the various block DEIM procedures. \Cref{sec:con} summarizes the key points and results of this paper.

\section{Review of CUR factorization algorithms}\label{sec:review}
In this section, we summarize some known index selection algorithms for computing a CUR factorization. We denote the spectral norm (2-norm) by $\norm{\cdot}$ and the notation $R^+$ denotes the Moore--Penrose pseudoinverse of $R$. We use MATLAB notations to index vectors and matrices, i.e., $A(:,\bp)$ denotes the $k$ columns of $A$ whose corresponding indices are in vector $\bp \in \N_+^k$. 

\subsection{{\sf MaxVol} Algorithm}

Maximum-volume submatrices play an important role in the construction of a CUR factorization of a matrix \cite{goreinov1997, Goreinov2010}. In this context, the volume of a matrix is defined as the absolute value of its determinant. While computing a submatrix of exact maximal volume is known to be an NP-hard problem \cite{Bartholdi}, in many practical applications, obtaining a submatrix of sufficiently large volume is often adequate. This can be efficiently achieved in polynomial time using the {\sf MaxVol} algorithm \cite{Goreinov2010}. 

The {\sf MaxVol} scheme is a search method designed to identify the submatrix with the largest volume in a tall-thin matrix. The method involves selecting a subset of rows and columns from the input matrix to create a smaller, well-conditioned submatrix. Operating as a greedy iterative approach, the {\sf MaxVol} algorithm systematically swaps rows to maximize the volume of a square submatrix.

First introduced by \citeauthor{goreinov1997} \cite{goreinov1997}, the {\sf MaxVol} method played a pivotal role in constructing a rank-$k$ CUR factorization known as the pseudoskeleton approximation. Subsequently, \citeauthor{oseledets2010tt} \cite{oseledets2010tt} developed a cross-approximation scheme that alternates between selecting rows and columns using the {\sf MaxVol} algorithm. In a recent study, \citeauthor{Mikhalev} \cite{Mikhalev} extended the results of square maximum-volume submatrices to the rectangular case and proposed an algorithm for constructing a rectangular submatrix with maximum volume.

Given a tall-thin matrix $U \in \R^{m\times k}$ containing the $k$-leading singular vectors, the {\sf MaxVol} procedure searches for $k$ row indices such that the resulting $k\times k$ upper submatrix $\wh U$ is dominant in $U$ \cite{Goreinov2010}. This means that  $|U\wh U^{-1}|_{ii}\le1$. While the dominant property does not necessarily imply that $\wh U$ has the maximum volume, it does guarantee that $\wh U$ is locally optimal. This implies that replacing any row in $\wh U$ with a row from $U$ that is not already present in $\wh U$ will not increase the volume. The procedure for finding a dominant submatrix using the {\sf MaxVol} algorithm is outlined in \cref{algo:maxvol}.

\begin{algorithm}[htb!]
{\footnotesize
\KwData {$U \in \R^{m \times k}$ with $m> k$, convergence tolerance $\delta$ (default 0.01)}

\KwResult {$\bs \in \N_+^k$ indices}

$\bs \leftarrow$ indices of the first $k$ pivoted rows from  LU decomposition of $U$

\Repeat{{ $\forall$ $(i,j)$: $|b_{ij}|<1+\delta $ }\label{algo:stopline}}{

Set $\wh U\leftarrow U(\bs,:)$ and $B\leftarrow U\wh U^{-1}$ \label{algo:invline}

Find the element of maximum absolute value in $B$: $(i, j)\leftarrow \argmax |b_{ij}|$

if $|b_{ij}|>1$, swap rows $i$ and $j$ in $B$: \ $\bs(j) = i$ \label{algo:swapline}
}
 \caption{{\sf MaxVol}: Approximation to dominant submatrix \cite{Goreinov2010}}\label{algo:maxvol}}
\end{algorithm}
In practice, a useful initialization step for the {\sf MaxVol} algorithm is to use the pivoted rows from the LU decomposition of the input matrix as the starting point \cite{Goreinov2010}.
The parameter $\delta$ is the convergence tolerance to find pivot elements; this parameter serves as a stopping criterion and should be sufficiently small (a good choice can be $0.01$) \cite{Goreinov2010}. Note that by swapping the rows as done in \cref{algo:swapline}, the volume of the upper submatrix in $B$ is increased, and also in $U$ until convergence. The most expensive part of the iterations is \cref{algo:invline}; this needs a $k\times k$ matrix inversion and $\calo(mk^2)$ operations for the matrix multiplication. Goreinov et al. \cite{Goreinov2010} describe a speed optimization process that avoids the expensive matrix multiplications and inversions. We refer the reader to \cite{Goreinov2010} for a more detailed explanation of the {\sf MaxVol} approach. 

With regards to computational cost, the initialization step of \Cref{algo:maxvol} requires a permutation of the input matrix, which can be done via the LU factorization. Given an $m \times k$ matrix, the LU decomposition requires $\calo(mk^2)$ operations. Furthermore, the dominant cost of each iteration in the algorithm is the multiplication of an $m\times k$ matrix and a $k \times k$ matrix, for a cost of $\calo(mk^2)$ operations. Let $\kappa \in \N_+$ denote the number of iterations performed. The computational complexity of the {\sf MaxVol} procedure is $\calo(\kappa mk^2)$. A crude bound on the number of iterations in \cref{algo:maxvol} is $\kappa \le (\log|{\sf det}(\wh U_{\sf dom})|-\log|{\sf det}(\wh U_{\sf ini})|)/\log(1+\delta)$, where $\wh U_{\sf ini}$ is the submatrix at the initialization step and $\wh U_{\sf dom}$ is the dominant submatrix in \cref{algo:maxvol} \cite{Goreinov2010}.

\subsection{Discrete Empirical Interpolation Method}
The DEIM point selection method is a deterministic greedy index selection algorithm originally presented in the context of model order reduction for nonlinear dynamical systems \cite{Barrault,Chaturantabut}. \citeauthor{Sorensen} \cite{Sorensen} show that this procedure is a viable index selection algorithm for constructing a CUR factorization. To select the indices for a rank-$k$ CUR factorization, first, compute a rank-$k$ SVD of the original matrix. Using as input the top-$k$ right and left singular vectors contained in $V$ and $U$, the DEIM algorithm selects $k$ column and row indices, denoted as $\bp$ and $\bs$, respectively, as in \cref{algo: DEIM}. \footnote{Note that the backslash operator used in the algorithms is a Matlab-type notation for solving linear systems and least-squares problems.} The DEIM procedure selects the indices by processing the singular vectors one at a time. We describe this method using the left singular vectors, and the procedure on the right singular vectors follows similarly. 

Starting from the leading left singular vector $\bu_1$, the first row index $s_1$ corresponds to the entry in $\bu_1$ with the largest magnitude, i.e., $|\bu_1(s_1)|=\norm{\bu_1}_\infty$ (where $\norm{\cdot}_\infty$ denotes the infinity norm). The remaining indices $s_j$ for $j=2, \dots, k$ are selected so that each index corresponds to the largest magnitude entry in the residual $\br_j=\bu_j-\mathbb{S}_{j-1}\bu_j$, where $\mathbb{S}_{j-1}$ is an interpolatory projector computed as $U_{j-1}(S_{j-1}^TU_{j-1})^{-1}S_{j-1}^T$ with $S$ being an identity matrix indexed by the $j-1$ selected indices. The linear independence of the columns of $U$ guarantees that $S_{j-1}^TU_{j-1}$ is nonsingular. As mentioned in our third motivating factor in \cref{sec:intro}, in the case where we have multiple index options, e.g., $|(\br_j)_1| = |(\br_j)_2|$, the smaller index is picked. For further details about the DEIM scheme, we refer the reader to \citep{Chaturantabut,Sorensen}.

\begin{algorithm}[htb!]
{\footnotesize
\KwData {$U \in \R^{m \times k}$  with $k\le m$ (full rank)}

\KwResult {Indices $\bs \in \N_+^k$ with non-repeating entries} 

$\bs(1)$ =  $\argmax_{1\le i\le m}~ |(U(:,\,1))_i|$

\For{ $j = 2, \dots, k$}{

$U(:,\,j) = U(:,\,j)-U(:,\,1:j-1)\cdot (U(\bs,\,1:j-1)
\ \backslash \ U(\bs,\,j))$  

$\bs(j)$ =  $\argmax_{1\le i\le m}~ |(U(:,\,j))_i|$\hspace{3mm}
}
  \caption{Discrete empirical interpolation index selection method \cite{Chaturantabut}}\label{algo: DEIM}}
\end{algorithm}

It is noteworthy to highlight a modification of the DEIM scheme proposed in \cite{Drmac}. This variant, known as QDEIM, employs a column-pivoted QR factorization on the transpose of matrices $U$ and $V$ to select the indices. The QDEIM algorithm offers a simpler approach compared to the original DEIM, yet maintains a projection error bound within the same order of magnitude. The algorithm's efficiency is further underscored by the prevalence of efficient column-pivoted QR implementations in various open-source packages, making it a practical and effective alternative for index selection in numerical applications.

\subsection{Pivoted QR factorization}
The classical truncated pivoted QR factorization is another approach for computing a CUR factorization \citep{Voronin}. A column-pivoted (rank-revealing) QR factorization (RRQR) of a matrix $A\in \R^{m\times n}$ with $m\ge n$ is of the form 

\begin{linenomath}\begin{equation*}
\begin{array}{ccccc}
A&\Pi & = &Q& T, \\
m\times n & n\times n &&m\times n &n\times n
\end{array}
\end{equation*}\end{linenomath}
where $\Pi$ is a permutation matrix, $Q$ is a matrix with orthonormal columns, and $T$ is an upper triangular matrix that satisfies the condition \cite{Voronin}
\[|T_{kk}|^2\ge \sum_{i=k}^j|T_{ij}|^2, ~j=k+1, \dots, n, \quad k=1, \dots, n.\] 

The QR factorization is commonly constructed incrementally using a greedy algorithm such as the column-pivoted Gram-Schmidt method. This approach allows for the option of stopping the process after the computation of the first $k$ terms, resulting in a partial QR factorization of $A$.
Given $\bp$, a vector of indices, we can express $\Pi$ as $I(:,\bp)$. Suppose we partition $Q$ and $T$ so that

\begin{equation}\label{eq:pivotedqr}
A(:,\bp) = [Q_1\ \ Q_2]\begin{bmatrix}
 {T_{11}} & {T_{12}} \\
 0 & T_{22} 
\end{bmatrix}=Q_1\,[T_{11}\ \ T_{12}] + Q_2\,[0 \ \ T_{22}],
\end{equation}
where $Q_1\in \R^{m\times k}$, $Q_2\in \R^{m\times(n-k)}$, $T_{11} \in \R^{k \times k}$, $T_{12} \in \R^{k\times (n-k)}$, \\$T_{22}\in \R^{(n-k)\times (n-k)}$, and $\wh A_k:=Q_1\,[T_{11}\ \ T_{12}]$, we have 

\[\norm{A\Pi- \wh A_k}\le\norm{T_{22}}\]
to be the error bound of a truncated pivoted QR decomposition of $A$. 
This implies that $Q_1$ is an approximation of the range of $A$ and as long as $\norm{T_{22}}$ is small, $A(:,\bp)$ can be approximated by $\wh A_k$. For an arbitrary $k$, the best rank-$k$ approximation of $A$ $(A_k)$ from the SVD gives $\norm{A-A_k}=\sigma_{k+1}(A)$, where $\sigma_{k+1}$ denotes the $(k+1)$st singular value. It is always the case that $\sigma_{k+1}(A)\le \norm{T_{22}}$.

The traditional column-pivoted Gram-Schmidt algorithm usually leads to an RRQR (i.e., $\norm{T_{22}}$ is small or close to $\sigma_{k+1}$). Nonetheless, there are instances where this method falls short of generating a factorization that yields a small value of $\norm{T_{22}}$ (see, e.g., \cite{gu1996}). There are several ways to compute an RRQR factorization \cite{chan1987rank,chandrasekaran1994,matrixcomp,gu1996}. The
computational complexities of these methods are slightly larger than the standard QR decomposition algorithm (see \cite{boutsidis2008selecting} for a tabulated comparison of various RRQR schemes).

A rank-revealing QR factorization is based on selecting certain well-conditioned submatrices \citep{chandrasekaran1994}, which is the underlying objective of the maximum volume concept. Notice that from the matrix partition, $Q_1T_{11}$ equals the first $k$ columns of $A(:,\bp)$. To construct a CUR decomposition, one may apply a Gram--Schmidt-based pivoted QR algorithm to the matrices $A$ and $A^T$ to obtain the matrices $C$ and $R$, respectively \citep{berry2005,Stewart}. Alternatively, \cite{Voronin} show how to compute a CUR factorization via a two-sided interpolative decomposition (ID), which in turn can be constructed from a column-pivoted QR factorization. Using the QDEIM approach \citep{Drmac}, one can apply a column-pivoted QR procedure on the transpose of the leading $k$ right and left singular vectors to find the indices for constructing the factors $C$ and $R$ in CUR approximation.

We will now use the tools described in this section to design our block variants of the standard DEIM algorithm in the following section.

\section{Block DEIM}\label{sec:block} This section introduces new block variants of the DEIM procedure. We combine the index selection algorithms discussed in \cref{sec:review} to design our block DEIM algorithms. The block DEIM algorithms select a $b$-size set of indices at each step using a block of singular vectors and then update the subsequent block of vectors using the oblique projection technique in the standard DEIM procedure. The main difference between the block DEIM and the traditional DEIM scheme resides in the $b$-size indices selection. In the standard DEIM, we select the indices by processing one singular vector at a time. Each iteration step produces an index whilst the block version processes a block of singular vectors at a time, and each step provides an index set of size $b$. As a result, this may lead to a different selection of indices. For ease of presentation, we assume that the number of singular vectors $k$ is a multiple of the block size $b$ and $2\le b < k$.

\subsection{Block DEIM based on maximum volume/RRQR} \label{sec:dmaxvol}
As our first block DEIM procedure, we present a block DEIM method that can be constructed using either the MaxVol procedure (B-DEIM-MaxVol) or an RRQR factorization (B-DEIM-RRQR), summarized in \cref{algo:B-Maxvol}.
For the standard DEIM method, each next index is picked greedily based on the maximal absolute value of an oblique projected singular vector. For this block variant, in every step, we greedily pick a fixed number ($b$) of indices based on an (approximate) maximal volume (absolute value of determinant) of a $b \times b$ submatrix of the projected singular vectors. To this end, we first exploit the {\sf MaxVol} scheme \cite{Goreinov2010} described in \cref{sec:review} to efficiently find a submatrix with the approximately maximal determinant (with a given tolerance $\delta$) in a thin-tall matrix. 

The {\sf MaxVol} algorithm seeks to find a good approximation of a submatrix with the maximum volume in a given input matrix.  It is important to note that this {\sf MaxVol} scheme and an RRQR decomposition share a common principle, i.e., finding a well-conditioned submatrix. As noted in \cite[Remark~2.3]{Drmac}, the pivoting in an RRQR factorization can be interpreted as a greedy volume maximizing scheme. Practical experience shows that the index selection via a column-pivoted QR factorization may be more computationally efficient than the {\sf MaxVol} method. With this insight in mind, we propose an alternative way of computing the block DEIM by employing a column-pivoted QR factorization. The B-DEIM-RRQR may be viewed as a hybrid standard DEIM and QDEIM scheme \cite{Drmac}. The QDEIM selects $k$ indices by performing one column-pivoted QR while our B-DEIM-RRQR algorithm selects the $k$ indices by performing $k/b$ rounds of a column-pivoted QR. Note that when the block size $b=k$, this B-DEIM-RRQR algorithm is just the QDEIM scheme.

\begin{algorithm}[htb!]
{\footnotesize
\KwData {$U \in \R^{m \times k}$, $V \in \R^{n \times k}$, $k\le \min(m,n)$, block size $b$ (with $b\,|\,k$), convergence tolerance $\delta$ (default 0.01)}

\KwResult {Indices $\bp, \bs \in \N_+^k$ with non-repeating entries} 

\For{ $j = 1, \dots, k/b$}{
\textbf{if} method = {\sf MaxVol}

\hskip1.5em $\bs((j-1)b+1:jb)={\sf MaxVol}(U(:,(j-1) b+1:jb),\,\delta)$ \label{algo:l1}

\hskip1.5em $\bp((j-1)b+1:jb)={\sf MaxVol}(V(:,(j-1)b+1:jb),\,\delta)$ \label{algo:l2}

\textbf{else}

\hskip1.5em Perform a column-pivoted QR on $U(\,:,(j-1) b+1:jb)^T$ and 

\hskip1.5em $V(\,:,(j-1) b+1:jb)^T$\label{step1}, giving the permutations $\Pi_\bs$ and $\Pi_\bp$ 

\hskip1.5em $\bs((j-1) b+1:jb)=\Pi_\bs(1:b)$ \label{perm1}

\hskip1.5em $\bp((j-1) b+1:jb)=\Pi_\bp(1:b)$ \label{perm2}

\textbf{end if}

Let ${\rm cols}=jb+1:jb+b$

$U(:,{\rm cols}) = U(:,{\rm cols}) - U(:,1:jb)\cdot(U(\bs,\,1:jb)\ \backslash \ U(\bs,\,{\rm cols}))$ \label{algo:l4}

$V(:,{\rm cols}) = V(:,{\sf col}) - V(:,1:jb)\cdot(V(\bp,\,1:jb)\ \backslash \ V(\bp,\,{\rm cols}))$ \label{algo:ll5}
}
 \caption{Block DEIM index selection based on {\sf MaxVol}}\label{algo:B-Maxvol}}
\end{algorithm}

In \cref{algo:l4,algo:ll5} of \cref{algo:B-Maxvol}, we use the oblique projection technique as in the standard DEIM scheme to update the subsequent blocks of singular vectors. 

We will now describe how \cref{algo:B-Maxvol} selects the row indices; the selection of the column indices follows similarly. \cref{algo:B-Maxvol} starts from the leading-$b$ dominant left singular vectors $U_{b}$. The initial index vector set, denoted as $\bs_{b}$, consists of the first $b$ row indices corresponding to the dominant submatrix in $U_{b}$. This submatrix can be obtained by either applying \cref{algo:maxvol} to $U_{b}$ or by performing a QR decomposition with column pivoting, should you opt for the B-DEIM-RRQR variant.  Let $\bs=[\bs_{b}]$, $S_{b}=I(:,\bs_{b})$, and define an oblique projection operator as $\mathbb{S}_{b}=U_{b}(S_{b}^TU_{b})^{-1}S_{b}^T$. 

Suppose we have $(j-1)b$ indices, so that 
\[\bs_{(j-1)b}=\smtxa{c}{s_{1}\\\vdots\\s_{(j-1)b}}, \quad S_{(j-1)b}=I(:,\bs_{(j-1)b}), \quad U_{(j-1)b}=[\bu_{1}, \dots, \bu_{(j-1)b}],\]
and 
\[\mathbb{S}_{(j-1)b}=U_{(j-1)b}\,(S_{(j-1)b}^TU_{(j-1)b})^{-1}\,S_{(j-1)b}^T.\]
Let $U_{jb}=U(:,jb+1\!:\!jb+b)$. Compute the updated vectors (residual) $U_{jb}=U_{jb}-\mathbb{S}_{(j-1)b}U_{jb}$ (see \cref{algo:l4} of the algorithm), and then select the subsequent sets of $b$ indices by applying \cref{algo:maxvol} (or a column-pivoted QR) to the updated matrix $U_{jb}$. It is worth noting that, using this oblique projection operator $\mathbb{S}_{(j-1)b}$ on the original $U_{jb}$ ensures that the $\bs_{(j-1)b}$ entries in the updated $U_{jb}$ are zero, which guarantees nonrepeating indices. At the end of the iteration, the algorithm returns a column and row index set of size $k$.

The following are two potential benefits of \cref{algo:B-Maxvol} compared to the standard DEIM.
\begin{itemize}
\item Approximation-wise, the greedy selection is not column-by-column, but carried out on a block of columns, possibly leading to a better pick of indices. Since the value of $b$ is usually modest, this procedure is still very affordable.
\item Computationally, the block procedure may have some benefits over the vector-variant, for instance in the work for the oblique projection.
\end{itemize}

The computational cost of the B-DEIM-Maxvol variant of this algorithm is primarily determined by two key factors. Firstly, the two calls of the MaxVol procedure contribute to the overall complexity, where the {\sf MaxVol} procedure has a time complexity of $\calo(\kappa(m+n)b^2)$. Secondly, the block updates (as described in \cref{algo:l4,algo:ll5}) contribute with complexities of  $\calo((m+n)k^2)$. Considering that \cref{algo:B-Maxvol} comprises $k/b$ iterations, the overall computational cost of the B-DEIM-MaxVol algorithm is $\calo((m+n)(\kappa bk +k^2))$.

On the other hand, the cost of the B-DEIM-RRQR scheme is dominated by the two QR factorizations as well as the block updates. Given an $n \times b$ matrix, a QR factorization necessitates $\calo(nb^2)$ operations. Consequently, the combined cost of the two QR decompositions in this scheme amounts to $\calo((m+n)b^2)$. Thus, the total computational cost of the B-DEIM-RRQR scheme after $k/b$ iterations is $\calo((m+n)(kb +k^2))$.

\subsection{Adaptive block DEIM} \label{sec:dadap}
In this section, we consider adaptive choices for the block size $b$. 
In particular, we propose the following variant called AdapBlock-DEIM:
Perform block DEIM if for the singular vector $\bv_j$ being considered, the two largest elements are (nearly) equal (see \cref{example1}), i.e., 
\[
\left\{ 
\begin{array}{ll}
{\rm block~DEIM} & \quad {\rm if} \ |(\bv_j)_i| \approx |(\bv_j)_\ell| \quad {\rm for}\ i \ne \ell, \\[1mm]
{\rm standard~DEIM} & \quad \text{otherwise}.
\end{array}
\right.
\]

As outlined in the \cref{sec:intro}, there are three motivating factors for the block DEIM variants. The adaptive DEIM aim to address the final motivation, specifically, when the DEIM procedure faces a difficult choice due to the (near) non-uniqueness of local maximizers. The adaptive DEIM can be applied selectively, intended for situations where the use of a block DEIM scheme is desired only when multiple entries exhibit nearly identical magnitudes. This consideration does not necessarily suggest any shortcomings of block DEIM; rather, the adaptive DEIM represents an approach that closely aligns with the original DEIM under specific conditions.

\begin{algorithm}[htb!]
{\footnotesize
\KwData {$U \in \R^{m \times k}$, $V \in \R^{n \times k}$, $k\le \min(m,n)$, block size $b$, $\rho$ (default 0.95), tolerance $\delta$ (default 0.01), method ({\sf MaxVol} or {\sf QR})}

\KwResult {Indices $\bp, \bs \in \N_+^k$ with non-repeating entries} 

$j=1$

\While{$j\le k$}{
\textbf{if} $j>1$ \textbf{then}

\hskip1.5em $\wt \bu=U(\bs(1:j-1),1:j-1) \ \backslash \ U(\bs(1:j-1),j)$

\hskip1.5em $U(:,j) = U(:,j) - U(:,1:j-1)\cdot\wt \bu$

\textbf{end if}

Let $u_1$ and $u_2$ be the two largest entries of $U(:, j)$ in magnitude

Let ${\sf ind}(1)$ be the index corresponding to $u_1$

\textbf{if} $(j+b-1>k) \ {\bf or} \ (u_2 < \rho \cdot u_1$) \textbf{then} $\bs(j)={\sf ind}(1)$; \ $j=j+1$

\textbf{else} 

\hskip1.5em ${\rm cols}=j+b-1$

\hskip1.5em \textbf{if} $j>1$ \textbf{then}

\hskip2.5em $\widetilde U= U(:,j+1:{\rm cols})$; \ \ $\widehat U= U(\bs(1:j-1),j+1:{\rm cols}))$

\hskip2.5em $\widetilde U = \widetilde U- U(:,1:j-1)\cdot(U(\bs(1:j-1),1:j-1) \ \backslash \ \widehat U$

\hskip2.5em $U(:,j+1:{\rm cols})=\widetilde U$

\hskip1.5em \textbf{end if}

\hskip1.5em \textbf{if} method = {\sf MaxVol}

\hskip2.5em $\bs(j:{\rm cols})={\sf MaxVol}(U(:, j:{\rm cols}), \, \delta)$ \label{algo:l5} 

\hskip1.5em \textbf{else}

\hskip2.5em Perform a column-pivoted QR on $U(:, j:{\rm cols})^T$\label{step2}, giving

\hskip2.7em permutation $\Pi_\bs$

\hskip2.5em $\bs(j:{\rm cols})=\Pi_\bs(1:b)$

\hskip1.5em\textbf{end if}

\hskip1.5em $j=j+b$

\textbf{end if}
}

Repeat the procedure on $V$ to get indices $\bp$

 \caption{Adaptive block DEIM index selection}\label{algo:Adapblock}}
\end{algorithm}

In \cref{algo:Adapblock}, we show an implementation of the adaptive block DEIM using the block DEIM variants discussed in \cref{sec:block}. The parameter $\rho$ is the desired lower bound on the ratio $|(\bv_j)_i| \, / \, |(\bv_j)_\ell|$ for $i \ne \ell$. Although our criterion for switching from a standard DEIM scheme to a block DEIM method is based on how close the two largest entries (magnitude) in the vector being considered are, other criteria can be used. Furthermore, in Algorithm \ref{algo:Adapblock}, the block size is predetermined before the index selection process. An alternative approach is to adapt the algorithm to dynamically determine a varying block size during the execution of the index selection. Consequently, the block size would correspond to the number of entries in the projected singular vector under consideration that are (nearly) equal given the threshold parameter $\rho$.

\subsection{Error bounds} \label{sec:errbound}
The following proposition restates a known theoretical error bound for a CUR approximation, which holds for the block DEIM algorithms proposed in this paper. A detailed constructive proof is in \cite{Sorensen}; we provide the necessary details here for the reader’s convenience. Let $P\in \R^{n\times k}$ and $S\in \R^{m\times k}$ be matrices with some columns of the identity indexed by the indices selected by employing any of the block DEIM algorithms. 
\begin{proposition}\label{pp1} \cite[Thm.~4.1]{Sorensen}
Given $A\in \R^{m \times n}$ and a target rank $k$, let $U\in \R^{m\times k}$ and $V\in \R^{n\times k}$ contain the leading $k$ left and right singular vectors of $A$, respectively. Suppose $C=AP$ and $R=S^T\!A$ are of full rank, and $V^T\!P$ and $S^TU$ are nonsingular. Then, with $M=C^+\!AR^+$, a rank-$k$ CUR decomposition constructed by either of the block DEIM schemes presented in this paper satisfies
\[\norm{A-CMR}\le(\eta_\bs + \eta_\bp)\,\sigma_{k+1} \quad with \quad \eta_\bs < \sqrt{\tfrac{nk}{3}}\,2^k~, \quad \eta_\bp < \sqrt{\tfrac{mk}{3}}\,2^k,\]
where $\eta_\bp=\norm{(V^T\!P)^{-1}}$, $\eta_\bs=\norm{(S^TU)^{-1}}$.
\end{proposition}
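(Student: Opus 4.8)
The plan is to reduce the CUR error to the error of two interpolatory projections applied to $A$, and then to bound each projection norm by the inverse norms $\eta_\bss$ and $\eta_\bp$. First I would write down the two oblique (interpolatory) projectors naturally associated with the selected indices: $\mathbb{P}_\bss = U(S^TU)^{-1}S^T$ on the column space and $\mathbb{P}_\bp = V(V^TP)^{-1}V^T$ (more precisely $P(V^TP)^{-1}V^T$ acting on the right). The key algebraic fact is that, for $M = C^+AR^+$ with $C = AP$, $R = S^TA$, one has the identity $A - CMR = (I - \mathbb{P}_\bss)A + \mathbb{P}_\bss A (I - \mathbb{P}_\bp^T)$ — or some rearrangement thereof — so that by the triangle norm inequality and $\norm{\mathbb{P}_\bss}\le\text{(something)}$ one gets $\norm{A - CMR} \le \norm{(I-\mathbb{P}_\bss)A} + \norm{A(I - \mathbb{P}_\bp)}$. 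This splitting is exactly the content of Sorensen–Embree's Theorem~4.1 construction, so I would cite it and only sketch the verification: check that $C^+AR^+$ really is the minimizer, or directly verify the identity by expanding $C M R = AP (AP)^+ A (S^TA)^+ S^TA$ and using that $AP(AP)^+$ and $(S^TA)^+S^TA$ are orthogonal projectors onto $\Range(C)$ and $\Range(R^T)$.

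Next I would bound each term. For the column term, since $U$ spans the leading $k$ left singular subspace, $(I - \mathbb{P}_\bss)U = 0$, hence $(I - \mathbb{P}_\bss)A = (I - \mathbb{P}_\bss)(A - UU^TA) = (I-\mathbb{P}_\bss)(A - A_k)$, where $A_k$ is the best rank-$k$ approximation. Therefore $\norm{(I - \mathbb{P}_\bss)A} \le \norm{I - \mathbb{P}_\bss}\,\sigma_{k+1}$. The standard fact about oblique projectors (when $\mathbb{P}_\bss \ne 0, I$) is $\norm{I - \mathbb{P}_\bss} = \norm{\mathbb{P}_\bss} = \norm{(S^TU)^{-1}}$ — this uses that $U$ has orthonormal columns and $S$ has orthonormal columns (columns of the identity), so $\norm{\mathbb{P}_\bss} = \norm{U(S^TU)^{-1}S^T} = \norm{(S^TU)^{-1}}$. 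Set this equal to $\eta_\bss$. The analogous argument on the right, using that $V$ spans the leading right singular subspace and $AV = $ (the first $k$ singular directions scaled), gives $\norm{A(I - \mathbb{P}_\bp)} \le \eta_\bp\,\sigma_{k+1}$ with $\eta_\bp = \norm{(V^TP)^{-1}}$. Adding the two bounds yields $\norm{A - CMR} \le (\eta_\bss + \eta_\bp)\,\sigma_{k+1}$.

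Finally I would establish the explicit upper bounds $\eta_\bss < \sqrt{nk/3}\;2^k$ and $\eta_\bp < \sqrt{mk/3}\;2^k$. This is the part that is genuinely specific to the index-selection rule rather than generic projector bookkeeping, and I expect it to be the main obstacle — or rather, the place where one must be careful about what the block algorithms actually guarantee. For standard DEIM, Sorensen–Embree prove $\norm{(S^TU)^{-1}} \le \sqrt{nk/3}\,2^k$ by an inductive argument on the growth of the entries of the inverse under the greedy/LU-type pivoting: the partial-pivoting structure forces the multipliers to have modulus $\le 1$, giving a $2^k$-type bound on $\norm{(S^TU)^{-1}}$ together with a dimensional factor. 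For the block variants I would argue that the same worst-case bound still holds: each of the three block schemes (B-MaxVol, B-RRQR, AdapBlock) produces an index set that is no worse — in the relevant sense of bounding $\norm{(S^TU)^{-1}}$ — than what a partially pivoted LU / greedy DEIM step would produce, because MaxVol's dominance property ($|b_{ij}|\le 1+\delta$) and RRQR's pivoting both control exactly the quantity that enters this bound. So I would either invoke that the block selection refines (never degrades) the pivot-growth bound, or simply note that $2^k$ is a crude universal bound valid for any selection that keeps $S^TU$ nonsingular with the pivoting constraints, and cite \cite[Thm.~4.1]{Sorensen} for the clean statement. The honest subtlety to flag: with $\delta > 0$ in MaxVol one should really write $(1+\delta)^k$ or absorb it, and for AdapBlock one must check that mixing the two regimes does not break the inductive bound — I would remark that since each block step's contribution is itself bounded by the same $2^{b}$-type factor and the blocks compose, the product telescopes to at most $2^k$.
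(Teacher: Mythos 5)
Your overall route coincides with the paper's (which is Sorensen--Embree's): split the CUR error into two projection errors and bound each by $\eta\,\sigma_{k+1}$, and your treatment of the singular-subspace step ($(I-\mathbb{S})U=0$, $\norm{I-\mathbb{S}}=\norm{\mathbb{S}}=\norm{(S^TU)^{-1}}$) is correct. The genuine problem is your ``key algebraic fact.'' With $M=C^+\!AR^+$ the valid identity is $A-CMR=(I-CC^+)A+CC^+A(I-R^+R)$, i.e.\ it involves the \emph{orthogonal} projectors $CC^+$ and $R^+R$, and that is exactly why the cross term costs nothing: $\norm{CC^+}=1$. The version you wrote with the oblique projectors $\mathbb{S}=U(S^TU)^{-1}S^T$ and $\mathbb{P}=P(V^TP)^{-1}V^T$ is not an identity for this $M$, and even if you only use it as a splitting, the triangle inequality then carries $\norm{\mathbb{S}}=\eta_\bss\ge 1$ on the second term, giving the weaker bound $(\eta_\bss+\eta_\bss\eta_\bp)\,\sigma_{k+1}$. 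Once you adopt the orthogonal-projector identity (your ``direct verification'' option), you still need the bridge that the paper quotes from Sorensen--Embree (Lemmas 4.1--4.2): $\norm{(I-CC^+)A}\le\norm{A(I-\mathbb{P})}$, which follows because $(I-CC^+)A\mathbb{P}=(I-CC^+)C(V^T\!P)^{-1}V^T=0$, and symmetrically $\norm{A(I-R^+R)}\le\norm{(I-\mathbb{S})A}$. Note also the pairing: the column factor $C=AP$ is controlled by $\mathbb{P}$ and hence by $\eta_\bp$, and the row factor $R=S^TA$ by $\mathbb{S}$ and $\eta_\bss$; your write-up has these crossed, which is harmless for the final sum but is a symptom of the missing bridging step.

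On the explicit constants $\eta_\bss<\sqrt{nk/3}\,2^k$ and $\eta_\bp<\sqrt{mk/3}\,2^k$: you are right that Sorensen--Embree prove these for the greedy DEIM selection and that their transfer to the block selections is not automatic; be aware, though, that the paper's own proof does not establish them either---it proves only $\norm{A-CMR}\le(\eta_\bss+\eta_\bp)\sigma_{k+1}$ and inherits the constants by citation. Your ``blocks compose and telescope to $2^k$'' sketch is not a proof (and, as you note yourself, the MaxVol tolerance would introduce $(1+\delta)$-type factors), so either prove a block analogue or state the constants as cited, as the paper does. In short: fix the projector identity and insert the two bridging inequalities, and your argument becomes the paper's; the constants paragraph is commentary rather than proof, but no worse than what the paper provides.
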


\Cref{pp1} suggests that the error constants $\eta_\bs$ and $\eta_\bp$ are indicators of the quality of an index selection method \cite{Sorensen}.

Let us consider a small illustrative example of potentially improved index selection by the block DEIM method.

\begin{example}\label{example1} {\rm
Consider the matrix of left singular vectors (where, e.g., $\varepsilon=10^{-15}$)
\[
U = \smtxa{cc}{\frac13 \sqrt{3} + {\varepsilon} & \phantom-0 \\[1mm]
\frac13 \sqrt{3} & \phantom-\frac12 \sqrt{2} + \varepsilon \\[1mm]
\frac13 \sqrt{3} & -\frac12 \sqrt{2}}, \quad \text{corresponding to singular values} \ \sigma_1, \ \sigma_2.
\]
It may easily be checked that, independent of the singular values, standard DEIM will
pick the first index, followed by the second.
However, it is clear that the second and third indices are a better choice,
especially if $\sigma_2$ is close to $\sigma_1$.
For instance, if $A = U \cdot \smtxa{cc}{1&0\\[1mm] 0& 0.99}$,
working with a determinant of $2 \times 2$ submatrices gives a better
result. Since
\[
\Big| \det \smtxa{cc}{\frac13 \sqrt{3} & \phantom-\frac12 \sqrt{2} + \varepsilon\\[1mm]
 \frac13 \sqrt{3} & -\frac12 \sqrt{2} } \Big|
> \Big| \det \smtxa{cc}{\frac13 \sqrt{3} + {\varepsilon} & \phantom-0 \\[1mm]
 \frac13 \sqrt{3} & \phantom-\frac12 \sqrt{2} + \varepsilon} \Big|,
\]
a block DEIM variant with block size 2 picks the more appropriate indices $2$ and $3$. Additionally, as noted earlier, the DEIM scheme attempts to minimize the quantity $\norm{(S^TU)^{-1}}$, where $S$ is an index selection matrix. From our simple example above, we have that
\[
\Big\lVert \bigg(\smtxa{cc}{\frac13 \sqrt{3} & \phantom-\frac12 \sqrt{2}+ \varepsilon \\[1mm]
 \frac13 \sqrt{3} & -\frac12 \sqrt{2} }\bigg)^{-1}\Big\rVert
< \Big\lVert \bigg( \smtxa{cc}{\frac13 \sqrt{3} + {\varepsilon} & \phantom-0 \\[1mm]
 \frac13 \sqrt{3} & \phantom-\frac12 \sqrt{2} + \varepsilon}\bigg)^{-1}\Big\rVert.
\]
This implies that the rows selected by the block DEIM schemes may yield a smaller quantity $\eta_\bs = \norm{(S^TU)^{-1}}$ compared to the DEIM procedure.
The chosen indices have a larger determinant and arguably are a better set of indices.
}
\end{example}

\section{Numerical Experiments}\label{sec:exper}
We now conduct several sets of illustrative larger-scale experiments to show the effectiveness of the block DEIM variants, i.e., B-DEIM-MaxVol, B-DEIM-RRQR, 
and AdapBlock-DEIM, proposed in this work using synthetic and real data sets. We evaluate the algorithms by applying them to data analysis problems in several application domains: recommendation system analysis, model order reduction, economic modeling, and optimization. We use real-world sparse and dense data matrices with sizes ranging from small to large scale. An overview is presented in \cref{ch3tab:exps}. We compare the performance of our algorithms for constructing a CUR approximation with three state-of-the-art deterministic algorithms: DEIM \cite{Sorensen}, QDEIM \cite{Drmac}, and {\sf MaxVol} \cite{Goreinov2010}. All these algorithms require the leading $k$ right and left singular vectors to construct a rank-$k$ CUR factorization. We use these two evaluation criteria: the rank-$k$ approximation relative error $\norm{A-CMR}\,/\,\norm{A}$ and the computational efficiency, i.e., the runtime scaling for the rank parameter $k$. Here, the runtime measures the time it takes each algorithm to select the desired number of column and row indices.  We do not consider the run time for computing the singular vectors since all the methods we consider require the SVD. However, it is important to note that the total cost of selecting the indices may be dominated by the computational cost of the SVD (see \cref{tab:lascale} where the runtimes of the SVD are included). Our experiments are not meant to be exhaustive; however, they provide clear evidence that the block DEIM schemes proposed in this paper may provide a comparable low-rank approximation while being computationally more efficient. Note that the runtime serves as a rough indicator of potential performance and may vary depending on, e.g., the programming language.

In the implementation, we perform a column-pivoted QR factorization (as RRQR) and the truncated SVD using the MATLAB built-in functions {\sf qr} and {\sf svds} \cite{baglama2005augmented}, respectively (and {\sf svd} for small cases). For the {\sf MaxVol} algorithm, we use a MATLAB implementation by \citeauthor{Kramer2016} \cite{Kramer2016} made available on GitHub\footnote{\href{https://github.com/bokramer/CURERA/blob/master/maxvol.m}{github.com/bokramer/CURERA/blob/master/maxvol.m}}. Unless otherwise stated, in all the experiments we use as default block size $b=5$ for small/mid-size matrices and $b=10$ for large-scale matrices, the AdapBlock-DEIM method parameter $\rho=0.95$, and the {\sf MaxVol} scheme convergence tolerance $\delta=0.01$. The $U$ and $V$ matrices (containing the left and right singular vectors or their approximates) are in dense format even if A is sparse.

\begin{table}[htb!]
\centering
\caption{Various examples and dimensions considered.\label{ch3tab:exps}}
{\footnotesize
\begin{tabular}{cllcccc} \hline \rule{0pt}{2.3ex}%
{\bf Exp.} & {\bf Domain} & {\bf Matrix} & $m$ & $n$ \\ \hline \rule{0pt}{2.3ex}%
1 & Recommendation system & ~Dense & 14116 & \phantom{12}100 \\
2 & Economic modeling & ~Sparse & 29610 &29610\\
3 & Optimization & ~Sparse & 29920	 &29920	\\
4 & Model order reduction  & ~Sparse & 23412 &23412\\
5 & Structural engineering  & ~Sparse & 22044	 &22044\\
6 & Synthetic & ~Dense & \phantom{1}2000 &\phantom{1}4000\\ \hline
\end{tabular}}
\end{table}

\begin{experiment} {\rm In this first set of experiments, we aim to evaluate how our proposed block-DEIM variants compared with the existing deterministic methods mentioned earlier on a small matrix. Our data set is from the recommendation system analysis domain, where one is usually interested in making service or purchase recommendations to users. One of the most common techniques for recommendation systems is collaborative filtering, which involves recommending to users items that customers with similar preferences liked in the past. The {\sf Jester} data set is often used as a benchmark for recommendation system research \cite{goldberg2001eigentaste}. This data matrix consists of $73421$ users and their ratings for $100$ jokes. We only consider users who have ratings for all $100$ jokes resulting in a $14116 \times 100$ matrix.

\begin{figure}[htb!]
\centering
%
%
\definecolor{mycolor1}{rgb}{0.00000,0.44700,0.74100}%
\definecolor{mycolor2}{rgb}{0.85000,0.32500,0.09800}%
\pgfplotsset{every axis title/.append style={at={(1,1)}}}
{\begin{tikzpicture}

\begin{groupplot}[
group style={
    group name=my plots,
    group size=2 by 2,
    xlabels at=edge bottom,
    ylabels at=edge left,    
    horizontal sep=1.5cm,
    vertical sep=3cm,
    },
     xmin=10,
     xmax=50,
    legend style={at={(-0.2,-0.5)},anchor=south,legend  columns =4},
width=0.5\linewidth,
height=5cm
]

\nextgroupplot[title=MaxVol methods,xlabel=rank-$k$, ylabel= {$\| A - CMR \|\ /\ \| A \|$}]
\addplot [color=red,line width=1.5pt,mark=asterisk]
  table[row sep=crcr]{%
5	0.599490007423766\\
10	0.468121361706147\\
15	0.42454916123905\\
20	0.413765688283733\\
25	0.362264004099457\\
30	0.357194143256654\\
35	0.301780071536928\\
40	0.284994316185363\\
45	0.266970354850498\\
50	0.240610308027583\\
};

\addplot [color=blue,line width=1.5pt,mark=square]
  table[row sep=crcr]{%
5	0.551636602092043\\
10	0.465266084376116\\
15	0.414447205829555\\
20	0.378634825772304\\
25	0.340758052508374\\
30	0.328735562352019\\
35	0.297994959138082\\
40	0.279788416557406\\
45	0.269639296289848\\
50	0.258358190268443\\
};

\addplot [color=green,line width=1.5pt,mark=diamond]
  table[row sep=crcr]{%
5	0.551636602092043\\
10	0.536626790830776\\
15	0.404631363873087\\
20	0.402895695105694\\
25	0.390561505445196\\
30	0.354936244857587\\
35	0.304758098991285\\
40	0.305668640162129\\
45	0.307350010807068\\
50	0.302297385008194\\
};

\addplot [color=black,line width=1.5pt,mark=o]
  table[row sep=crcr]{%
5	0.551636602092043\\
10	0.465266084376116\\
15	0.409154064532598\\
20	0.380594107141777\\
25	0.335979598829517\\
30	0.333617258501672\\
35	0.297347735674794\\
40	0.274447443709111\\
45	0.260615743936223\\
50	0.249624056220473\\
};
\nextgroupplot[xlabel=rank-$k$,scaled y ticks=base 10:\exponent,ylabel=time(s)]

\addplot [color=red,line width=1.5pt,mark=asterisk]
  table[row sep=crcr]{%
5	0.00083102\\
10	0.00232641\\
15	0.00719682\\
20	0.01185963\\
25	0.01873095\\
30	0.02729086\\
35	0.03490451\\
40	0.04343089\\
45	0.05269186\\
50	0.06544395\\
};
\addlegendentry{DEIM}

\addplot [color=blue,line width=1.5pt,mark=square]
  table[row sep=crcr]{%
5	0.00139283\\
10	0.00362625\\
15	0.00553859\\
20	0.0094696\\
25	0.01262166\\
30	0.01529332\\
35	0.01882498\\
40	0.02209524\\
45	0.02908552\\
50	0.03274253\\
};
\addlegendentry{B-DEIM-MaxVol}

\addplot [color=green,line width=1.5pt,mark=diamond]
  table[row sep=crcr]{%
5	0.0010918\\
10	0.01000623\\
15	0.02832728\\
20	0.02361689\\
25	0.04625237\\
30	0.0912537\\
35	0.09682609\\
40	0.10479772\\
45	0.13490289\\
50	0.14543998\\
};
\addlegendentry{MaxVol}

\addplot [color=black,line width=1.5pt,mark=o]
  table[row sep=crcr]{%
5	0.00170038\\
10	0.00534398\\
15	0.01024146\\
20	0.01616331\\
25	0.02131899\\
30	0.03497813\\
35	0.03761877\\
40	0.04526648\\
45	0.05904611\\
50	0.06205741\\
};
\addlegendentry{AdapBlock-MaxVol}
\nextgroupplot[title=RRQR methods, xlabel=rank-$k$,ylabel= {$\| A - CMR \|\ /\ \| A \|$}]

\addplot [color=red,line width=1.5pt,mark=asterisk]
  table[row sep=crcr]{%
5	0.599490007423766\\
10	0.468121361706147\\
15	0.42454916123905\\
20	0.413765688283733\\
25	0.362264004099457\\
30	0.357194143256654\\
35	0.301780071536928\\
40	0.284994316185363\\
45	0.266970354850498\\
50	0.240610308027583\\
};

\addplot [color=blue,line width=1.5pt,mark=square]
  table[row sep=crcr]{%
5	0.553453322752088\\
10	0.471156638563995\\
15	0.405659464965516\\
20	0.372421082856211\\
25	0.335997452026574\\
30	0.31427860367719\\
35	0.28809584958384\\
40	0.281344411693283\\
45	0.27838105731384\\
50	0.257723595246451\\
};

\addplot [color=green,line width=1.5pt,mark=diamond]
  table[row sep=crcr]{%
5	0.553453322752088\\
10	0.561089949783087\\
15	0.458157381639904\\
20	0.424616594957831\\
25	0.408492583490474\\
30	0.391096985236472\\
35	0.335657217065409\\
40	0.409801611583529\\
45	0.293639075779904\\
50	0.295864175540893\\
};

\addplot [color=black,line width=1.5pt,mark=o]
  table[row sep=crcr]{%
5	0.553453322752088\\
10	0.469549598995476\\
15	0.425182004846978\\
20	0.379627579203797\\
25	0.352338250479379\\
30	0.319242952700123\\
35	0.286335433784398\\
40	0.269598350967355\\
45	0.264474366876646\\
50	0.247788497859541\\
};
\nextgroupplot[xlabel=rank-$k$, ylabel=time(s)]
\addplot [color=red,line width=1.5pt,mark=asterisk]
  table[row sep=crcr]{%
5	0.00083102\\
10	0.00232641\\
15	0.00719682\\
20	0.01185963\\
25	0.01873095\\
30	0.02729086\\
35	0.03490451\\
40	0.04343089\\
45	0.05269186\\
50	0.06544395\\
};
\addlegendentry{DEIM}

\addplot [color=blue,line width=1.5pt,mark=square]
  table[row sep=crcr]{%
5	0.00185439\\
10	0.00393858\\
15	0.00602717\\
20	0.01019511\\
25	0.0124101\\
30	0.01675018\\
35	0.01840286\\
40	0.02342047\\
45	0.02972271\\
50	0.0313212\\
};
\addlegendentry{B-DEIM-RRQR}

\addplot [color=green,line width=1.5pt,mark=diamond]
  table[row sep=crcr]{%
5	0.00177657\\
10	0.00347272\\
15	0.00466944\\
20	0.00836578\\
25	0.00943317\\
30	0.01338987\\
35	0.01883183\\
40	0.02598667\\
45	0.0267998\\
50	0.03136462\\
};
\addlegendentry{Q-DEIM}

\addplot [color=black,line width=1.5pt,mark=o]
  table[row sep=crcr]{%
5	0.00204999\\
10	0.00538739\\
15	0.00948512\\
20	0.01349167\\
25	0.0234334\\
30	0.03410716\\
35	0.03803654\\
40	0.04400741\\
45	0.05604693\\
50	0.05708153\\
};
\addlegendentry{AdapBlock-RRQR}

\end{groupplot}

\end{tikzpicture}}%
\caption{Relative approximation errors (left) and runtimes (right) as a function of $k$ for the block DEIM CUR approximation algorithms compared with some standard CUR approximation algorithms using the {\sf Jester} data set.\label{fig:3.2}}
\end{figure}

Based on the observations from \cref{fig:3.2}, we can conclude that the block DEIM methods generally provide slightly more accurate approximations compared to state-of-the-art methods. It is also important to note that the error of the {\sf MaxVol} and QDEIM approximations do not always decrease monotonically as the rank $k$ increases. When considering the runtime, both the B-DEIM-MaxVol and B-DEIM-RRQR algorithms demonstrate significantly lower computational times compared to the original DEIM scheme. In these small/mid-scale experiments, the adaptive variants of the block DEIM methods do not seem to improve runtimes compared to the DEIM procedure. There could be several reasons for this observation: the adaptive variants of block DEIM methods involve additional computations and operations compared to the standard DEIM procedure. These additional steps may introduce computational overhead that offsets the potential gains in runtime. In small/mid-scale scenarios, the overhead might outweigh the benefits. On the other hand, it is evident that the B-DEIM-MaxVol algorithm and its adaptive variant are more efficient than the standard {\sf MaxVol} approach. By utilizing the block DEIM variants, we gain improvements in both accuracy and speed compared to the standard {\sf MaxVol} method. Additionally, the B-DEIM-RRQR method proves to be equally efficient as the QDEIM procedure while providing a more accurate approximation here.
}
\end{experiment}

\begin{experiment}{\rm 
In the subsequent series of experiments, we turn our attention to evaluating the performance of our proposed block-DEIM variants when dealing with large-scale data. With a block size of $b=10$ selected for this particular set of experiments, our primary goal is to gain insights into how our block-DEIM approaches tackle the challenges presented by large-scale data sets and to assess their effectiveness and efficiency in this context. To conduct these evaluations, we utilize a set of standard test matrices specifically designed for sparse matrix problems. These data matrices are sourced from the publicly available SuiteSparse Matrix Collection. The diverse nature of these matrices allows us to assess the effectiveness of our approaches across various problem domains.

The first test matrix, referred to as {\sf g7jac100}, is derived from the ``Overlapping Generations Model'' used to study the social security systems of the G7 nations. It is a sparse matrix with dimensions $29610\times 29610$ and contains $335972$ numerically nonzero entries. Notably, this matrix has a low rank of $21971$. The second matrix, named {\sf net100}, originates from an optimization problem. It has dimensions of $29920\times 29920$ and contains $2033200$ numerically nonzero entries. Similar to the previous matrix, {\sf net100} also possesses a low rank, specifically $26983$. The {\sf Abacus-shell-ud} matrix, associated with model order reduction, has dimensions $23412\times 23412$ and represents a rank-$2048$ structure. It contains $218484$ nonzero entries. Lastly, we have {\sf pkustk01}, a symmetric positive-definite matrix derived from a civil engineering problem. This matrix has dimensions of $22044\times 22044$, a low rank of $3732$, and consists of $979380$ nonzero entries.

In the case of large-scale data sets, similar to the small/mid-scale experiments, in \cref{fig:3.21,fig:3.22,fig:3.23,fig:3.24} the block DEIM variants maintain comparable reconstruction errors as the existing methods. This finding aligns with our observations from the small/mid-scale experiments. However, there are notable differences in terms of algorithm efficiency.
Unlike the small/mid-scale cases, where the adaptive variants have similar runtimes as the DEIM scheme, in the large-scale experiments, the adaptive variants demonstrate better computational efficiency than the standard DEIM scheme. On the other hand, the B-DEIM-MaxVol and B-DEIM-RRQR schemes showcase better speed efficiency overall. These block DEIM variants prove to be effective in achieving a balance between accuracy and computational efficiency in the context of large-scale data. Consistent with previous findings, the MaxVol algorithm generally remains the least efficient method, with one exception in the case of the {\sf g7jac100} data set.
These results highlight the importance of considering the specific characteristics and requirements of the data sets when selecting an appropriate algorithm.

\begin{figure}[htb!]
    \centering
%
%
\definecolor{mycolor1}{rgb}{0.00000,0.44700,0.74100}%
\definecolor{mycolor2}{rgb}{0.85000,0.32500,0.09800}%
\pgfplotsset{every axis title/.append style={at={(1,1)}}}
{\begin{tikzpicture}

\begin{groupplot}[
group style={
    group name=my plots,
    group size=2 by 2,
    xlabels at=edge bottom,
    ylabels at=edge left,
    horizontal sep=1.5cm,
    vertical sep=3cm,
    },
     xmin=100,
     xmax=500,
    legend style={at={(-0.2,-0.5)},anchor=south,legend  columns =4},
width=0.5\linewidth,
height=5cm
]

\nextgroupplot[title=MaxVol methods,xlabel=rank-$k$, ylabel= {$\| A - CMR \|\ /\ \| A \|$}]
\addplot [color=red,line width=1.5pt,mark=asterisk]
  table[row sep=crcr]{%
100	0.518471507862923\\
120	0.493971812175609\\
140	0.461240032105582\\
160	0.432597975789714\\
180	0.377563176141657\\
200	0.369512550201253\\
220	0.312698455220967\\
240	0.297557231089682\\
260	0.285650094731187\\
280	0.27102440420121\\
300	0.251480219806051\\
320	0.249434374574061\\
340	0.229105882454455\\
360	0.221666734385136\\
380	0.216947295778839\\
400	0.209715219208256\\
420	0.198039899215204\\
440	0.196302595152347\\
460	0.19329559541864\\
480	0.188624527154271\\
500	0.184295592438967\\
};

\addplot [color=blue,line width=1.5pt,mark=square]
  table[row sep=crcr]{%
100	0.558861646380679\\
120	0.474319118658313\\
140	0.451183788546649\\
160	0.386718609010585\\
180	0.364090262869298\\
200	0.337365070697103\\
220	0.328619427993173\\
240	0.309842077083575\\
260	0.297125490910785\\
280	0.275150569570335\\
300	0.26928896635232\\
320	0.235922553424094\\
340	0.230543591740065\\
360	0.223312720975655\\
380	0.210735947377604\\
400	0.208068263121497\\
420	0.198041080098301\\
440	0.196304490853212\\
460	0.193295587548082\\
480	0.188624926842113\\
500	0.184295980482613\\
};

\addplot [color=green,line width=1.5pt,mark=diamond]
  table[row sep=crcr]{%
100	0.541811236388936\\
120	0.495209114740307\\
140	0.448524468778537\\
160	0.415218257492386\\
180	0.433764020234211\\
200	0.451919120462753\\
220	0.472634809530381\\
240	0.485189397469178\\
260	0.495423486320642\\
280	0.332554655278211\\
300	0.508499732344035\\
320	0.358008706289369\\
340	0.39812108790721\\
360	0.376855410928887\\
380	0.378487848554036\\
400	0.307062648095574\\
420	0.309944454151539\\
440	0.311210483592218\\
460	0.263858535954967\\
480	0.258164723181026\\
500	0.260784596094849\\
};

\addplot [color=black,line width=1.5pt,mark=o]
  table[row sep=crcr]{%
100	0.561129445218334\\
120	0.520106235595328\\
140	0.452754373741477\\
160	0.386718609010585\\
180	0.366406581162439\\
200	0.337365899891732\\
220	0.327092904690799\\
240	0.311239618125877\\
260	0.29712531088274\\
280	0.275148786424299\\
300	0.26928896635232\\
320	0.236552891601408\\
340	0.23054389878244\\
360	0.223312720975655\\
380	0.210735947377604\\
400	0.206640015662598\\
420	0.198041050884464\\
440	0.196304413001838\\
460	0.193295587548082\\
480	0.188624926842113\\
500	0.184295980482613\\
};
\nextgroupplot[xlabel=rank-$k$,ylabel=time(s)]

\addplot [color=red,line width=1.5pt,mark=asterisk]
  table[row sep=crcr]{%
100	0.738109\\
120	0.9671176\\
140	1.3510901\\
160	1.6416619\\
180	2.0596095\\
200	2.9031888\\
220	3.3209467\\
240	4.082354\\
260	4.9187129\\
280	5.7175135\\
300	6.5463422\\
320	7.4816507\\
340	8.2590442\\
360	9.3901307\\
380	10.3276131\\
400	11.4871543\\
420	12.6961873\\
440	13.7997056\\
460	14.5127016\\
480	16.2883833\\
500	18.2936792\\
};
\addlegendentry{DEIM}

\addplot [color=blue,line width=1.5pt,mark=square]
  table[row sep=crcr]{%
100	0.2115235\\
120	0.2171567\\
140	0.2565045\\
160	0.288343\\
180	0.3409674\\
200	0.3942068\\
220	0.4637455\\
240	0.5264475\\
260	0.5750367\\
280	0.6597199\\
300	0.7275896\\
320	0.794588\\
340	0.8838613\\
360	0.9815979\\
380	1.1189922\\
400	1.1814202\\
420	1.2196211\\
440	1.2717791\\
460	1.359176\\
480	1.4715439\\
500	1.5838737\\
};
\addlegendentry{B-DEIM-MaxVol}

\addplot [color=green,line width=1.5pt,mark=diamond]
  table[row sep=crcr]{%
100	0.3206484\\
120	0.4816991\\
140	0.3673486\\
160	0.506684\\
180	0.5752395\\
200	0.7350836\\
220	0.9576913\\
240	0.9105379\\
260	0.9521985\\
280	1.3222234\\
300	1.3814935\\
320	1.6490489\\
340	1.9471079\\
360	2.1313716\\
380	2.517576\\
400	2.8122197\\
420	2.4111286\\
440	2.5268093\\
460	3.0337009\\
480	3.1830234\\
500	4.4285225\\
};
\addlegendentry{MaxVol}

\addplot [color=black,line width=1.5pt,mark=o]
  table[row sep=crcr]{%
100	0.3981347\\
120	0.6056128\\
140	0.8690831\\
160	0.8740424\\
180	1.1657947\\
200	1.329588\\
220	1.6430587\\
240	2.1713765\\
260	2.3866258\\
280	2.8543146\\
300	2.9893467\\
320	3.1505885\\
340	3.5214008\\
360	4.3314004\\
380	4.4245298\\
400	5.3467325\\
420	5.7518832\\
440	6.1928015\\
460	7.0446944\\
480	7.6540806\\
500	7.7963173\\
};
\addlegendentry{AdapBlock-MaxVol}

\nextgroupplot[title=RRQR methods, xlabel=rank-$k$,ylabel= {$\| A - CMR \|\ /\ \| A \|$}]

\addplot [color=red,line width=1.5pt,mark=asterisk]
  table[row sep=crcr]{%
100	0.518471507862923\\
120	0.493971812175609\\
140	0.461240032105582\\
160	0.432597975789714\\
180	0.377563176141657\\
200	0.369512550201253\\
220	0.312698455220967\\
240	0.297557231089682\\
260	0.285650094731187\\
280	0.27102440420121\\
300	0.251480219806051\\
320	0.249434374574061\\
340	0.229105882454455\\
360	0.221666734385136\\
380	0.216947295778839\\
400	0.209715219208256\\
420	0.198039899215204\\
440	0.196302595152347\\
460	0.19329559541864\\
480	0.188624527154271\\
500	0.184295592438967\\
};

\addplot [color=blue,line width=1.5pt,mark=square]
  table[row sep=crcr]{%
100	0.561579209387949\\
120	0.520030903157774\\
140	0.452234452588656\\
160	0.383134922459981\\
180	0.364405890276683\\
200	0.349706769331529\\
220	0.3257564133634\\
240	0.313825712429049\\
260	0.30176899060446\\
280	0.279752068629662\\
300	0.273307929140647\\
320	0.233609477124332\\
340	0.227668790161977\\
360	0.220677709067463\\
380	0.209199336831561\\
400	0.207823778444469\\
420	0.198040682046974\\
440	0.19630393237425\\
460	0.193318833483545\\
480	0.188659599497146\\
500	0.184308995070822\\
};

\addplot [color=green,line width=1.5pt,mark=diamond]
  table[row sep=crcr]{%
100	0.549359961674411\\
120	0.498082865750254\\
140	0.450065194102351\\
160	0.413175939879289\\
180	0.432827793622456\\
200	0.451462467689926\\
220	0.47273686675168\\
240	0.485455716783496\\
260	0.364013814906252\\
280	0.328755734574091\\
300	0.508501801217514\\
320	0.36155826901847\\
340	0.36766176006494\\
360	0.374815250810793\\
380	0.378520956510429\\
400	0.290841520527453\\
420	0.309986384247543\\
440	0.311209876740541\\
460	0.246624275249654\\
480	0.25019140358422\\
500	0.253102550586343\\
};

\addplot [color=black,line width=1.5pt,mark=o]
  table[row sep=crcr]{%
100	0.561572909735467\\
120	0.519994413027964\\
140	0.450800037934766\\
160	0.382055096795171\\
180	0.362093579845303\\
200	0.347109411716835\\
220	0.326844389564443\\
240	0.31308695686336\\
260	0.30032968930251\\
280	0.276796093504028\\
300	0.261917742861471\\
320	0.255246444109639\\
340	0.224023667212074\\
360	0.219668422362017\\
380	0.215229017824455\\
400	0.206075398364005\\
420	0.203056160773443\\
440	0.19630412413943\\
460	0.193295443105354\\
480	0.188659752347161\\
500	0.184348075519889\\
};
\nextgroupplot[xlabel=rank-$k$, ylabel=time(s)]
\addplot [color=red,line width=1.5pt,mark=asterisk]
  table[row sep=crcr]{%
100	0.738109\\
120	0.9671176\\
140	1.3510901\\
160	1.6416619\\
180	2.0596095\\
200	2.9031888\\
220	3.3209467\\
240	4.082354\\
260	4.9187129\\
280	5.7175135\\
300	6.5463422\\
320	7.4816507\\
340	8.2590442\\
360	9.3901307\\
380	10.3276131\\
400	11.4871543\\
420	12.6961873\\
440	13.7997056\\
460	14.5127016\\
480	16.2883833\\
500	18.2936792\\
};
\addlegendentry{DEIM}

\addplot [color=blue,line width=1.5pt,mark=square]
  table[row sep=crcr]{%
100	0.1963719\\
120	0.2185565\\
140	0.2823946\\
160	0.3164598\\
180	0.3574829\\
200	0.4023751\\
220	0.4483645\\
240	0.5085465\\
260	0.5807693\\
280	0.6248059\\
300	0.703557\\
320	0.7620836\\
340	0.8331746\\
360	0.9117284\\
380	1.0065283\\
400	1.0703847\\
420	1.1644077\\
440	1.2505647\\
460	1.3594201\\
480	1.4323872\\
500	1.5579722\\
};
\addlegendentry{B-DEIM-RRQR}

\addplot [color=green,line width=1.5pt,mark=diamond]
  table[row sep=crcr]{%
100	0.1655535\\
120	0.2145025\\
140	0.2677999\\
160	0.3554147\\
180	0.5169549\\
200	0.6143796\\
220	0.7390828\\
240	0.9424734\\
260	1.0694075\\
280	1.2741538\\
300	1.4557652\\
320	1.7745532\\
340	1.871172\\
360	2.1365271\\
380	2.3573583\\
400	2.6752952\\
420	2.8798791\\
440	3.0900393\\
460	3.372058\\
480	3.7581564\\
500	4.0157662\\
};
\addlegendentry{Q-DEIM}

\addplot [color=black,line width=1.5pt,mark=o]
  table[row sep=crcr]{%
100	0.4052859\\
120	0.6055837\\
140	0.7463877\\
160	0.7688278\\
180	1.0610489\\
200	1.229024\\
220	1.5835553\\
240	2.0807258\\
260	2.4960747\\
280	2.9078948\\
300	3.0847168\\
320	3.2866949\\
340	4.0181666\\
360	4.8593673\\
380	4.9698212\\
400	5.4438014\\
420	6.2206309\\
440	6.9906123\\
460	7.5055744\\
480	8.1933515\\
500	8.3881958\\
};
\addlegendentry{AdapBlock-RRQR}

\end{groupplot}

\end{tikzpicture}}%
    \caption{Relative approximation errors (left) and runtimes (right) as a function of $k$ for the block DEIM CUR approximation algorithms compared with some standard CUR approximation algorithms using the {\sf g7jac100} sparse matrix.\label{fig:3.21}}  
\end{figure}
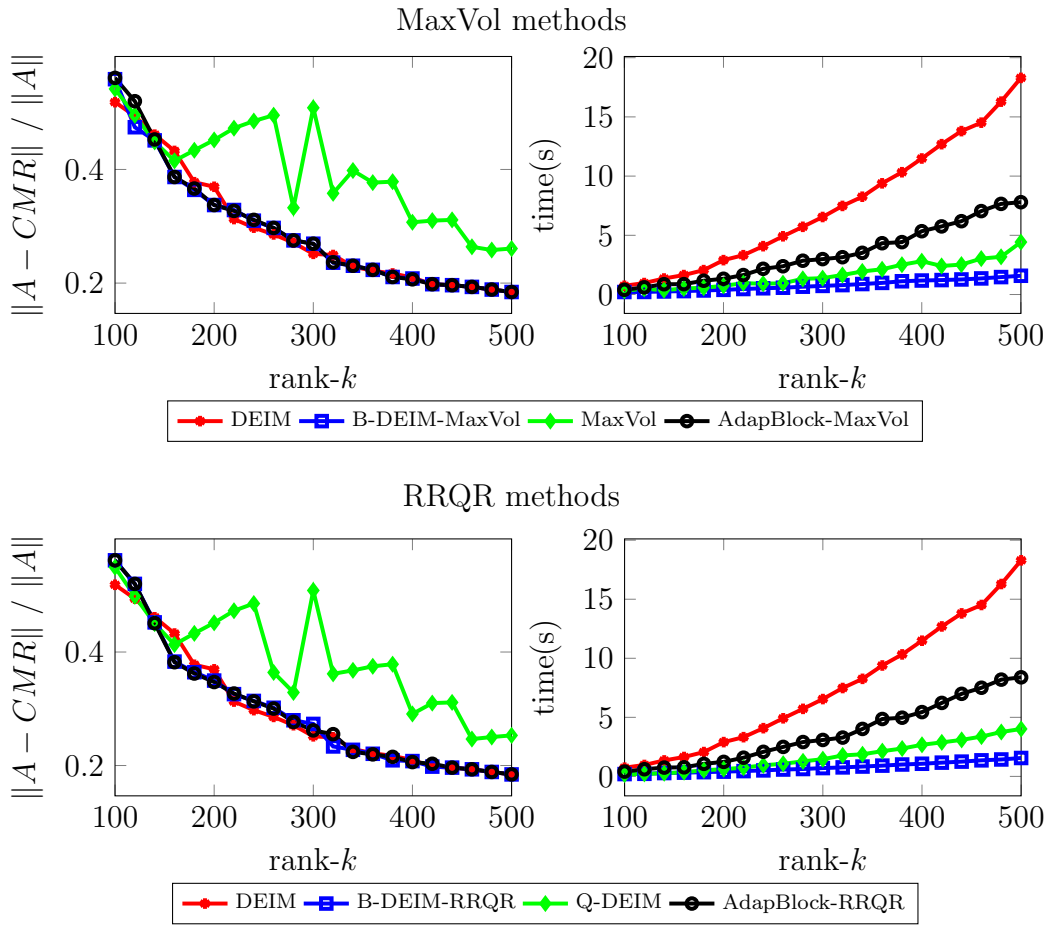

\begin{figure}[htb!]
    \centering
%
%
\definecolor{mycolor1}{rgb}{0.00000,0.44700,0.74100}%
\definecolor{mycolor2}{rgb}{0.85000,0.32500,0.09800}%
\pgfplotsset{every axis title/.append style={at={(1,1)}}}
{\begin{tikzpicture}

\begin{groupplot}[
group style={
    group name=my plots,
    group size=2 by 2,
    xlabels at=edge bottom,
    ylabels at=edge left,
    horizontal sep=1.5cm,
    vertical sep=3cm,
    },
     xmin=100,
     xmax=500,
    legend style={at={(-0.2,-0.5)},anchor=south,legend  columns =4},
width=0.45\linewidth,
height=5cm
]

\nextgroupplot[title=MaxVol methods,xlabel=rank-$k$, ylabel= {$\| A - CMR \|\ /\ \| A \|$}]
\addplot [color=red,line width=1.5pt,mark=asterisk]
  table[row sep=crcr]{%
100	0.785915748941538\\
150	0.764812912007411\\
200	0.752361050303119\\
250	0.672632506676465\\
300	0.657784643908181\\
350	0.622867389715398\\
400	0.569211887583689\\
450	0.559070533335692\\
500	0.557277235614791\\
};

\addplot [color=blue,line width=1.5pt,mark=square]
  table[row sep=crcr]{%
100	0.787319305776992\\
150	0.751705508803443\\
200	0.743790028781915\\
250	0.671687479234122\\
300	0.654610523738024\\
350	0.612411334280278\\
400	0.567704044787977\\
450	0.553516997009367\\
500	0.551764937136436\\
};

\addplot [color=green,line width=1.5pt,mark=diamond]
  table[row sep=crcr]{%
100	0.793549401303383\\
150	0.76449486316263\\
200	0.760741261842174\\
250	0.688249721921323\\
300	0.681059305919705\\
350	0.697044365049344\\
400	0.609525527066757\\
450	0.58950292422982\\
500	0.574299514675946\\
};

\addplot [color=black,line width=1.5pt,mark=o]
  table[row sep=crcr]{%
100	0.786746395283589\\
150	0.750406099262703\\
200	0.742889420452976\\
250	0.671029070096899\\
300	0.657589314014104\\
350	0.625107353643479\\
400	0.565517013400189\\
450	0.544516070715524\\
500	0.542300031586324\\
};
\nextgroupplot[xlabel=rank-$k$,ylabel=time(s)]

\addplot [color=red,line width=1.5pt,mark=asterisk]
  table[row sep=crcr]{%
100	0.6326047\\
150	1.4441278\\
200	2.5214358\\
250	3.9510294\\
300	5.6242923\\
350	7.6960595\\
400	10.2072181\\
450	12.8639043\\
500	16.0980154\\
};
\addlegendentry{DEIM}

\addplot [color=blue,line width=1.5pt,mark=square]
  table[row sep=crcr]{%
100	0.3306703\\
150	0.4309458\\
200	0.6640768\\
250	0.8760142\\
300	1.1339923\\
350	1.4586857\\
400	1.8309039\\
450	2.2047306\\
500	2.8149707\\
};
\addlegendentry{B-DEIM-MaxVol}

\addplot [color=green,line width=1.5pt,mark=diamond]
  table[row sep=crcr]{%
100	0.2579957\\
150	1.8961476\\
200	0.9454121\\
250	2.6792389\\
300	1.6593337\\
350	7.6853569\\
400	12.8888449\\
450	11.7984686\\
500	14.7135653\\
};
\addlegendentry{MaxVol}

\addplot [color=black,line width=1.5pt,mark=o]
  table[row sep=crcr]{%
100	0.3949428\\
150	0.6907796\\
200	1.1152832\\
250	1.5249473\\
300	1.994186\\
350	2.5553943\\
400	3.2508483\\
450	3.9423278\\
500	4.6969872\\
};
\addlegendentry{AdapBlock-MaxVol}

\nextgroupplot[title=RRQR methods, xlabel=rank-$k$,ylabel= {$\| A - CMR \|\ /\ \| A \|$}]

\addplot [color=red,line width=1.5pt,mark=asterisk]
  table[row sep=crcr]{%
100	0.785915748941538\\
150	0.764812912007411\\
200	0.752361050303119\\
250	0.672632506676465\\
300	0.657784643908181\\
350	0.622867389715398\\
400	0.569211887583689\\
450	0.559070533335692\\
500	0.557277235614791\\
};

\addplot [color=blue,line width=1.5pt,mark=square]
  table[row sep=crcr]{%
100	0.791364094903062\\
150	0.748998973874715\\
200	0.723753663165394\\
250	0.671999751374696\\
300	0.657650923189572\\
350	0.634171121496038\\
400	0.565549622360929\\
450	0.543431259272207\\
500	0.539622045834168\\
};

\addplot [color=green,line width=1.5pt,mark=diamond]
  table[row sep=crcr]{%
100	0.817612222796921\\
150	0.691583543583441\\
200	0.633132181290521\\
250	0.635994871261912\\
300	0.619209210167121\\
350	0.651937166430743\\
400	0.594144702287352\\
450	0.560024943882168\\
500	0.597834542471622\\
};

\addplot [color=black,line width=1.5pt,mark=o]
  table[row sep=crcr]{%
100	0.792940531007331\\
150	0.75525042003326\\
200	0.737546742209694\\
250	0.668812191176654\\
300	0.653950616625597\\
350	0.622739265945017\\
400	0.559380891679103\\
450	0.545600505090727\\
500	0.54343312166475\\
};
\nextgroupplot[xlabel=rank-$k$, ylabel=time(s)]
\addplot [color=red,line width=1.5pt,mark=asterisk]
  table[row sep=crcr]{%
100	0.6326047\\
150	1.4441278\\
200	2.5214358\\
250	3.9510294\\
300	5.6242923\\
350	7.6960595\\
400	10.2072181\\
450	12.8639043\\
500	16.0980154\\
};
\addlegendentry{DEIM}

\addplot [color=blue,line width=1.5pt,mark=square]
  table[row sep=crcr]{%
100	0.1993483\\
150	0.3602156\\
200	0.513326\\
250	0.6879016\\
300	0.9194383\\
350	1.1977879\\
400	1.4859106\\
450	1.8491024\\
500	2.5110869\\
};
\addlegendentry{B-DEIM-RRQR}

\addplot [color=green,line width=1.5pt,mark=diamond]
  table[row sep=crcr]{%
100	0.1638751\\
150	0.2607939\\
200	0.4640347\\
250	0.8530066\\
300	1.4352998\\
350	1.5465362\\
400	1.9550306\\
450	2.4075371\\
500	3.119051\\
};
\addlegendentry{Q-DEIM}

\addplot [color=black,line width=1.5pt,mark=o]
  table[row sep=crcr]{%
100	0.3801715\\
150	0.6158495\\
200	0.9556588\\
250	1.3916556\\
300	1.8771699\\
350	2.2797177\\
400	2.9111325\\
450	3.5946579\\
500	4.2853553\\
};
\addlegendentry{AdapBlock-RRQR}

\end{groupplot}

\end{tikzpicture}}%
    \caption{Relative approximation errors (left) and runtimes (right) as a function of $k$ for the block DEIM CUR approximation algorithms compared with some standard CUR approximation algorithms using the {\sf net100} sparse matrix.\label{fig:3.22}}    
\end{figure}
\begin{figure}[htb!]
    \centering
%
%
\definecolor{mycolor1}{rgb}{0.00000,0.44700,0.74100}%
\definecolor{mycolor2}{rgb}{0.85000,0.32500,0.09800}%
\pgfplotsset{every axis title/.append style={at={(1,1)}}}
{\begin{tikzpicture}

\begin{groupplot}[
group style={
    group name=my plots,
    group size=2 by 2,
    xlabels at=edge bottom,
    ylabels at=edge left,
    horizontal sep=1.5cm,
    vertical sep=3cm,
    },
     xmin=100,
     xmax=500,
   legend style={at={(-0.2,-0.5)},anchor=south,legend  columns =4},
width=0.45\linewidth,
height=5cm
]

\nextgroupplot[title=MaxVol methods,xlabel=rank-$k$, ylabel= {$\| A - CMR \|\ /\ \| A \|$}]
\addplot [color=red, mark=asterisk, mark options={solid, red}, line width=1.5pt]
  table[row sep=crcr]{%
100	0.731006176062257\\
150	0.675429870137618\\
200	0.603736934375861\\
250	0.495962568088575\\
300	0.46323483429763\\
350	0.448651872976158\\
400	0.4367203319692\\
450	0.421607185245341\\
500	0.393615935529745\\
};
\addplot [color=blue, mark=square, mark options={solid, blue}, line width=1.5pt]
  table[row sep=crcr]{%
100	0.731312530380574\\
150	0.639628266124923\\
200	0.606449739349624\\
250	0.514166377759352\\
300	0.469459980637182\\
350	0.456517389568782\\
400	0.456259158302393\\
450	0.405236202298007\\
500	0.384750615714942\\
};
\addplot [color=green, mark=diamond, mark options={solid, green}, line width=1.5pt]
  table[row sep=crcr]{%
100	0.673514647368648\\
150	0.551902219963793\\
200	0.521944979620028\\
250	0.495868928487359\\
300	0.479036900248891\\
350	0.44849704035231\\
400	0.436429529798299\\
450	0.394783514083698\\
500	0.367932541543967\\
};
\addplot [color=black, mark=o, mark options={solid, black}, line width=1.5pt]
  table[row sep=crcr]{%
100	0.731312530380574\\
150	0.639628266124919\\
200	0.606444588017904\\
250	0.497697740796272\\
300	0.471149063148321\\
350	0.446827648729365\\
400	0.440189618603804\\
450	0.404856335310227\\
500	0.38129476838518\\
};
\nextgroupplot[xlabel=rank-$k$,ylabel=time(s)]

\addplot [color=red, mark=asterisk, mark options={solid, red},line width=1.5pt]
  table[row sep=crcr]{%
100	0.6304604\\
150	1.3742761\\
200	2.4410774\\
250	3.7826641\\
300	5.4943013\\
350	7.45935\\
400	9.7615815\\
450	12.4115156\\
500	15.4153137\\
};
\addlegendentry{DEIM}

\addplot [color=blue, mark=square, mark options={solid, blue},line width=1.5pt]
  table[row sep=crcr]{%
100	0.3085502\\
150	0.4844009\\
200	0.6866624\\
250	0.9043605\\
300	1.1704532\\
350	1.4456309\\
400	1.7834823\\
450	2.1260178\\
500	2.563097\\
};
\addlegendentry{B-DEIM-MaxVol}

\addplot [color=green, mark=diamond, mark options={solid, green},line width=1.5pt]
  table[row sep=crcr]{%
100	2.0335299\\
150	3.9611879\\
200	3.5136555\\
250	4.826744\\
300	8.7266944\\
350	11.9125421\\
400	15.7511643\\
450	17.9496706\\
500	23.225687\\
};
\addlegendentry{MaxVol}

\addplot [color=black, mark=o, mark options={solid, black},line width=1.5pt]
  table[row sep=crcr]{%
100	0.3791572\\
150	0.7780111\\
200	1.1188292\\
250	1.5173956\\
300	2.2158121\\
350	2.7603108\\
400	3.3662135\\
450	4.4621321\\
500	5.1922435\\
};
\addlegendentry{AdapBlock-MaxVol}

\nextgroupplot[title=RRQR methods, xlabel=rank-$k$,ylabel= {$\| A - CMR \|\ /\ \| A \|$}]

\addplot [color=red, mark=asterisk, mark options={solid, red}, line width =1.5pt]
  table[row sep=crcr]{%
100	0.731006176062257\\
150	0.675429870137618\\
200	0.603736934375861\\
250	0.495962568088575\\
300	0.46323483429763\\
350	0.448651872976158\\
400	0.4367203319692\\
450	0.421607185245341\\
500	0.393615935529745\\
};
\addplot [color=blue, mark=square, mark options={solid, blue}, line width =1.5pt]
  table[row sep=crcr]{%
100	0.717490761794574\\
150	0.680083581146623\\
200	0.604761072231738\\
250	0.51329705280673\\
300	0.466273761665729\\
350	0.442489379423096\\
400	0.438198825001182\\
450	0.400721226137398\\
500	0.384034261979663\\
};
\addplot [color=green, mark=diamond, mark options={solid, green}, line width =1.5pt]
  table[row sep=crcr]{%
100	0.649195314820347\\
150	0.540298351970616\\
200	0.555787376583647\\
250	0.486653636957656\\
300	0.463065712513251\\
350	0.438012816299073\\
400	0.415046884544927\\
450	0.386791747240483\\
500	0.415520105112322\\
};
\addplot [color=black, mark=o, mark options={solid, black}, line width =1.5pt]
  table[row sep=crcr]{%
100	0.717490761794574\\
150	0.680083945072015\\
200	0.60640817841195\\
250	0.505726784010023\\
300	0.466353653376281\\
350	0.454372843598452\\
400	0.443901370338997\\
450	0.405841071931926\\
500	0.369685310481709\\
};
\nextgroupplot[xlabel=rank-$k$, ylabel=time(s)]
\addplot [color=red, mark=asterisk, mark options={solid, red},line width =1.5pt]
  table[row sep=crcr]{%
100	0.6304604\\
150	1.3742761\\
200	2.4410774\\
250	3.7826641\\
300	5.4943013\\
350	7.45935\\
400	9.7615815\\
450	12.4115156\\
500	15.4153137\\
};
\addlegendentry{DEIM}

\addplot [color=blue, mark=square, mark options={solid, blue},line width =1.5pt]
  table[row sep=crcr]{%
100	0.1817532\\
150	0.3127434\\
200	0.4763983\\
250	0.6795662\\
300	0.8903106\\
350	1.1642954\\
400	1.4742678\\
450	1.8205858\\
500	2.1884504\\
};
\addlegendentry{B-DEIM-RRQR}

\addplot [color=green, mark=diamond, mark options={solid, green},line width =1.5pt]
  table[row sep=crcr]{%
100	0.1227004\\
150	0.1926106\\
200	0.2996184\\
250	0.4875772\\
300	0.7003834\\
350	1.0140221\\
400	1.3608423\\
450	1.7669146\\
500	2.1481623\\
};
\addlegendentry{Q-DEIM}

\addplot [color=black, mark=o, mark options={solid, black},line width =1.5pt]
  table[row sep=crcr]{%
100	0.2531509\\
150	0.5990439\\
200	0.8979281\\
250	1.2836183\\
300	1.7324341\\
350	2.1918368\\
400	3.2005734\\
450	3.7633641\\
500	4.9095401\\
};
\addlegendentry{AdapBlock-RRQR}

\end{groupplot}

\end{tikzpicture}}%
    \caption{Relative approximation errors (left) and runtimes (right) as a function of $k$ for the block DEIM CUR approximation algorithms compared with some standard CUR approximation algorithms using the {\sf Abacusa-shell-ud} sparse matrix.\label{fig:3.23}}    
\end{figure}

\begin{figure}[htb!]
    \centering
%
%
\definecolor{mycolor1}{rgb}{0.00000,0.44700,0.74100}%
\definecolor{mycolor2}{rgb}{0.85000,0.32500,0.09800}%
\pgfplotsset{every axis title/.append style={at={(1,1)}}}
{\begin{tikzpicture}

\begin{groupplot}[
group style={
    group name=my plots,
    group size=2 by 2,
    xlabels at=edge bottom,
    ylabels at=edge left,
    horizontal sep=1.5cm,
    vertical sep=3cm,
    },
     xmin=100,
     xmax=500,
    legend style={at={(-0.2,-0.5)},anchor=south,legend  columns =4},
width=0.45\linewidth,
height=5cm
]

\nextgroupplot[title=MaxVol methods,xlabel=rank-$k$, ylabel= {$\| A - CMR \|\ /\ \| A \|$}]
\addplot [color=red,line width=1.5pt,mark=asterisk]
  table[row sep=crcr]{%
100	0.835667090832298\\
150	0.754829812331701\\
200	0.704786979352574\\
250	0.693968916757072\\
300	0.663889489935972\\
350	0.634375002205603\\
400	0.625309970820682\\
450	0.625339858267061\\
500	0.561702116881604\\
};

\addplot [color=blue,line width=1.5pt,mark=square]
  table[row sep=crcr]{%
100	0.80890646981547\\
150	0.740475587621965\\
200	0.707819999536479\\
250	0.699206987631314\\
300	0.637924069647191\\
350	0.618302579334822\\
400	0.592584217597319\\
450	0.588134905683175\\
500	0.571405625458529\\
};

\addplot [color=green,line width=1.5pt,mark=diamond]
  table[row sep=crcr]{%
100	0.818438833489536\\
150	0.722406478674924\\
200	0.702172056703871\\
250	0.702918372593783\\
300	0.652378485033405\\
350	0.584864195273961\\
400	0.572620027571593\\
450	0.573074624381159\\
500	0.565599017199298\\
};

\addplot [color=black,line width=1.5pt,mark=o]
  table[row sep=crcr]{%
100	0.80890646981547\\
150	0.740475587621965\\
200	0.707819999536479\\
250	0.699206987631314\\
300	0.637924069647191\\
350	0.618302579334822\\
400	0.592584217597319\\
450	0.588134905683175\\
500	0.571405625458529\\
};
\nextgroupplot[xlabel=rank-$k$,ylabel=time(s)]

\addplot [color=red,line width=1.5pt,mark=asterisk]
  table[row sep=crcr]{%
100	0.6813474\\
150	1.555798\\
200	2.7087483\\
250	4.0408611\\
300	5.8659706\\
350	8.1640449\\
400	11.3032215\\
450	14.3891896\\
500	17.656137\\
};
\addlegendentry{DEIM}

\addplot [color=blue,line width=1.5pt,mark=square]
  table[row sep=crcr]{%
100	0.2925055\\
150	0.5153048\\
200	0.7427635\\
250	0.9962476\\
300	1.2383762\\
350	1.5970881\\
400	1.9796111\\
450	2.3519816\\
500	2.8806525\\
};
\addlegendentry{B-DEIM-MaxVol}

\addplot [color=green,line width=1.5pt,mark=diamond]
  table[row sep=crcr]{%
100	1.9768989\\
150	3.4647431\\
200	4.526783\\
250	6.6137329\\
300	10.9016395\\
350	12.9612844\\
400	16.4766828\\
450	21.6026201\\
500	26.4780213\\
};
\addlegendentry{MaxVol}

\addplot [color=black,line width=1.5pt,mark=o]
  table[row sep=crcr]{%
100	0.4115532\\
150	0.6451238\\
200	1.0085611\\
250	1.3800459\\
300	1.8893044\\
350	2.4264338\\
400	3.0201004\\
450	3.7088324\\
500	4.5069163\\
};
\addlegendentry{AdapBlock-MaxVol}

\nextgroupplot[title=RRQR methods, xlabel=rank-$k$,ylabel= {$\| A - CMR \|\ /\ \| A \|$}]

\addplot [color=red,line width=1.5pt,mark=asterisk]
  table[row sep=crcr]{%
100	0.835667090832298\\
150	0.754829812331701\\
200	0.704786979352574\\
250	0.693968916757072\\
300	0.663889489935972\\
350	0.634375002205603\\
400	0.625309970820682\\
450	0.625339858267061\\
500	0.561702116881604\\
};

\addplot [color=blue,line width=1.5pt,mark=square]
  table[row sep=crcr]{%
100	0.815761567043561\\
150	0.734838855942786\\
200	0.704264100567504\\
250	0.671402715648435\\
300	0.64766300705507\\
350	0.616517986563206\\
400	0.587700742658633\\
450	0.575176062074754\\
500	0.553440491192616\\
};

\addplot [color=green,line width=1.5pt,mark=diamond]
  table[row sep=crcr]{%
100	0.820978470772503\\
150	0.72520081398979\\
200	0.710879833894972\\
250	0.682269863707333\\
300	0.639394129625334\\
350	0.619586664569729\\
400	0.575759483605372\\
450	0.574958830757008\\
500	0.569927493379126\\
};

\addplot [color=black,line width=1.5pt,mark=o]
  table[row sep=crcr]{%
100	0.815761567043561\\
150	0.734838855942786\\
200	0.704264100567504\\
250	0.671402715648435\\
300	0.64766300705507\\
350	0.616517986563206\\
400	0.587700742658633\\
450	0.575176062074754\\
500	0.553440491192616\\
};
\nextgroupplot[xlabel=rank-$k$, ylabel=time(s)]
\addplot [color=red,line width=1.5pt,mark=asterisk]
  table[row sep=crcr]{%
100	0.6813474\\
150	1.555798\\
200	2.7087483\\
250	4.0408611\\
300	5.8659706\\
350	8.1640449\\
400	11.3032215\\
450	14.3891896\\
500	17.656137\\
};
\addlegendentry{DEIM}

\addplot [color=blue,line width=1.5pt,mark=square]
  table[row sep=crcr]{%
100	0.1968704\\
150	0.3236936\\
200	0.5516448\\
250	0.7043459\\
300	1.000145\\
350	1.2918529\\
400	1.651528\\
450	2.0038767\\
500	2.4728857\\
};
\addlegendentry{B-DEIM-RRQR}

\addplot [color=green,line width=1.5pt,mark=diamond]
  table[row sep=crcr]{%
100	0.3116483\\
150	0.6621166\\
200	1.1374546\\
250	1.739453\\
300	2.406961\\
350	3.1781154\\
400	4.1184946\\
450	5.1760309\\
500	6.2496314\\
};
\addlegendentry{Q-DEIM}

\addplot [color=black,line width=1.5pt,mark=o]
  table[row sep=crcr]{%
100	0.2686286\\
150	0.5352267\\
200	0.8294175\\
250	1.1729687\\
300	1.5739364\\
350	2.0976771\\
400	2.6937188\\
450	3.3358207\\
500	4.1027333\\
};
\addlegendentry{AdapBlock-RRQR}

\end{groupplot}

\end{tikzpicture}}%
    \caption{Relative approximation errors (left) and runtimes (right) as a function of $k$ for the block DEIM CUR approximation algorithms compared with some standard CUR approximation algorithms using the {\sf pkustk01} sparse matrix.\label{fig:3.24}}   
\end{figure}

\begin{table}[htb!]
\footnotesize \centering
\caption{Comparison of the error and runtimes for selecting $k=500$ columns and rows (the runtimes include time for computing the rank-$k$ SVD) for the various algorithms using the large scale data sets.}\label{tab:lascale}
\begin{tabular}{l|ccc|ccc|cc}\hline \rule{0pt}{2.3ex}%
Method $\backslash$ Data           & \multicolumn{2}{c}{\sf net100}        & & \multicolumn{2}{c}{\sf pkustk01}   & & \multicolumn{2}{c}{\sf Abacus-shell-ud}\\ 
                  & Error & Time (s)      &   & Error & Time (s)     &   & Error & Time (s)     \\ \hline \rule{0pt}{2.3ex}%
SVD     &0.38&79&&0.07&69&&0.11&54\\[0.5mm] \hdashline
&&&&&& \\[-3mm] 
DEIM     &0.56 &99&&0.18&88&&0.39&69\\
QDEIM     &0.59 &84&&0.25&73&&0.42&56\\
{\sf MaxVol}&0.57 &97&&0.26&74&&0.37&76\\
B-DEIM-RRQR&0.54 &82&&0.18&71&&0.38&56\\
B-DEIM-MAxVol&0.55 &83&&0.18&71&&0.38&57\\
AdapBlock-RRQR&0.54&85&&0.18&79&&0.37&59\\
AdapBlock-MaxVol&0.54&86&&0.18&80&&0.38&59\\\hline
\end{tabular}
\end{table}

Table 4 presents the relative errors and runtimes, incorporating the SVD runtime, for the different algorithms. Despite the SVD runtime being the dominant factor in the overall runtime of the index selection process, it is evident that enhancing the efficiency of the index selection step contributes to an overall improvement. Specifically, in the comparison between DEIM and the block variants, while maintaining similar approximation qualities, the block DEIM algorithms exhibit at least a 10\% reduction in runtimes.}

\end{experiment}

\begin{experiment}{\rm Using two of the block DEIM algorithms proposed: the B-DEIM-MaxVol and B-DEIM-RRQR, we investigate how varying block sizes, i.e., $b=(2,\,5,\,10,\,20)$ may affect their approximation quality and computational efficiency.

Following the experiments in \cite{Voronin}, our test matrix in this experiment is a full-rank data set $A \in \R^{2000 \times 4000}$ that has the structure of the SVD, i.e., $A=U\Sigma V^T$. The matrices $U$ and $V$ have random orthonormal columns obtained via a QR factorization of a random Gaussian matrix, and the diagonal matrix $\Sigma$ has entries that are logspace ranging from $1$ to $10^{-3}$.  For each fixed block size, maintaining the properties of $A$, we generate five different test cases and compute the averages of the evaluation criteria for the range of $k$ values.

We observe in \cref{fig:3.6} that both algorithms become considerably faster for increasing block sizes. On the other hand, the approximation quality of the varying block sizes may not degrade significantly. In this experiment, given the various values of $k$, the errors are almost similar irrespective of the block size. 

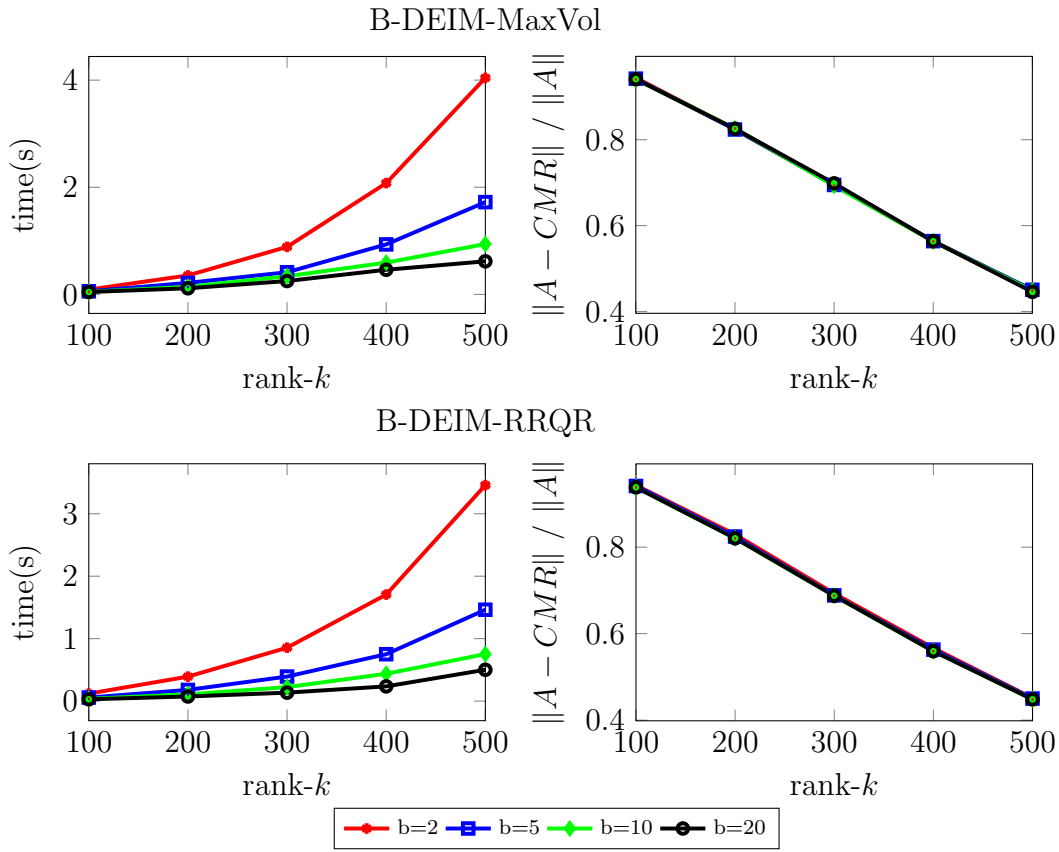
\begin{figure}[htb!]
\centering
{
%
%
\definecolor{mycolor1}{rgb}{0.00000,0.44700,0.74100}%
\definecolor{mycolor2}{rgb}{0.85000,0.32500,0.09800}%
\pgfplotsset{every axis title/.append style={at={(1,1)}}}
{\begin{tikzpicture}

\begin{groupplot}[
group style={
    group name=my plots,
    group size=2 by 2,
    xlabels at=edge bottom,
    ylabels at=edge left,
    horizontal sep=2cm,
    vertical sep=2cm,
    },
     xmin=100,
     xmax=500,
    legend style={at={(-0.2,-0.5)},anchor=south,legend  columns =4},
width=0.5\linewidth,
height=5cm
]

\nextgroupplot[title=B-DEIM-MaxVol,xlabel=rank-$k$, ylabel=time(s)]
\addplot [color=red,line width=1.5pt,mark=asterisk]
  table[row sep=crcr]{%
100	0.0887726\\
200	0.35407828\\
300	0.88673608\\
400	2.07701128\\
500	4.04147728\\
600	5.2506437\\
700	7.65894234\\
800	9.96627014\\
};

\addplot [color=blue,line width=1.5pt,mark=square]
  table[row sep=crcr]{%
100	0.0572176\\
200	0.21463516\\
300	0.41335524\\
400	0.93294458\\
500	1.72238766\\
600	2.41847948\\
700	3.24412402\\
800	4.31720902\\
};

\addplot [color=green,line width=1.5pt,mark=diamond]
  table[row sep=crcr]{%
100	0.0479965\\
200	0.14066016\\
300	0.33931272\\
400	0.59257522\\
500	0.93938296\\
600	1.34930446\\
700	1.89453\\
800	2.43790924\\
};

\addplot [color=black,line width=1.5pt,mark=o]
  table[row sep=crcr]{%
100	0.04416232\\
200	0.11384288\\
300	0.24922754\\
400	0.45949758\\
500	0.61713812\\
600	0.85792192\\
700	1.21624956\\
800	1.64648362\\
};

\nextgroupplot[xlabel=rank-$k$,ylabel={$\| A - CMR \|\ /\ \| A \|$}]

\addplot [color=red,line width=1.5pt,mark=asterisk]
  table[row sep=crcr]{%
100	0.944183630542758\\
200	0.824636631171269\\
300	0.694695567697066\\
400	0.563475768307043\\
500	0.449497461957857\\
600	0.350979370498818\\
700	0.274481342454704\\
800	0.207627564181034\\
};
\addplot [color=blue,line width=1.5pt,mark=square]
  table[row sep=crcr]{%
100	0.942116998418969\\
200	0.823758171666379\\
300	0.694763623467651\\
400	0.563955031307782\\
500	0.45069913463227\\
600	0.354130953116287\\
700	0.270022989545213\\
800	0.206547306530086\\
};
\addplot [color=green,line width=1.5pt,mark=diamond]
  table[row sep=crcr]{%
100	0.940566353955937\\
200	0.826143988492494\\
300	0.692802768991783\\
400	0.563295409052597\\
500	0.44882325007881\\
600	0.347239492200104\\
700	0.270953991263695\\
800	0.207147050140799\\
};
\addplot [color=black,line width=1.5pt,mark=o]
  table[row sep=crcr]{%
100	0.940878782315506\\
200	0.825408342540737\\
300	0.698449556266016\\
400	0.563754577551219\\
500	0.445781343033938\\
600	0.349067462478657\\
700	0.270372070191756\\
800	0.20477502453564\\
};
\nextgroupplot[title=B-DEIM-RRQR, xlabel=rank-$k$,ylabel= time(s)]

\addplot [color=red,line width=1.5pt,mark=asterisk]
  table[row sep=crcr]{%
100	0.11984622\\
200	0.39211038\\
300	0.85539432\\
400	1.705663\\
500	3.45741068\\
600	5.27365046\\
700	7.79719114\\
800	10.45609392\\
};

\addplot [color=blue,line width=1.5pt,mark=square]
  table[row sep=crcr]{%
100	0.05704802\\
200	0.17950766\\
300	0.39335772\\
400	0.7509832\\
500	1.46292196\\
600	2.16048092\\
700	3.21591964\\
800	4.51129194\\
};

\addplot [color=green,line width=1.5pt,mark=diamond]
  table[row sep=crcr]{%
100	0.0372964\\
200	0.1075827\\
300	0.2245618\\
400	0.43761452\\
500	0.75266596\\
600	1.20190492\\
700	1.66113032\\
800	2.27754234\\
};

\addplot [color=black,line width=1.5pt,mark=o]
  table[row sep=crcr]{%
100	0.0295753\\
200	0.0740089\\
300	0.13554286\\
400	0.23588602\\
500	0.50172998\\
600	0.59081848\\
700	0.99335288\\
800	1.25078242\\
};

\nextgroupplot[xlabel=rank-$k$, ylabel={$\| A - CMR \|\ /\ \| A \|$}]
\addplot [color=red,line width=1.5pt,mark=asterisk]
  table[row sep=crcr]{%
100	0.94316056613792\\
200	0.829292863521185\\
300	0.692958046061168\\
400	0.566782552134941\\
500	0.45113114604773\\
600	0.35259454789801\\
700	0.272149454379493\\
800	0.207962076962845\\
};
\addlegendentry{b=2}

\addplot [color=blue,line width=1.5pt,mark=square]
  table[row sep=crcr]{%
100	0.941526385992422\\
200	0.824549181382843\\
300	0.688590152192489\\
400	0.563073306402433\\
500	0.449982172307823\\
600	0.349350288570522\\
700	0.271919078323407\\
800	0.205493556465548\\
};
\addlegendentry{b=5}

\addplot [color=green,line width=1.5pt,mark=diamond]
  table[row sep=crcr]{%
100	0.938443403144106\\
200	0.820589013864922\\
300	0.687630102530473\\
400	0.559491161395726\\
500	0.448038117858879\\
600	0.347447750191122\\
700	0.269071158667418\\
800	0.206856717067825\\
};
\addlegendentry{b=10}

\addplot [color=black,line width=1.5pt,mark=o]
  table[row sep=crcr]{%
100	0.938443403144106\\
200	0.820589013864922\\
300	0.687630102530473\\
400	0.559491161395726\\
500	0.448038117858879\\
600	0.347447750191122\\
700	0.269071158667418\\
800	0.206856717067825\\
};
\addlegendentry{b=20}
\end{groupplot}

\end{tikzpicture}}
\caption{Runtimes and average approximation errors for the B-DEIM-MaxVol (up) and B-DEIM-RRQR (down) CUR approximation algorithms as a function of $k$ for varying block sizes using large matrices of size $2000 \times 4000$.\label{fig:3.6}}
\end{figure}
}
\end{experiment}

\section{Conclusions}\label{sec:con}
This paper presents various block variants of the DEIM scheme for computing CUR decompositions. We exploit the advantages of the classical DEIM procedure, a column-pivoted QR decomposition, and the concept of maximum determinant or volume of submatrices to develop these block variants. We have then presented a version of the block DEIM, which allows for an adaptive choice of block size.

We perform the following procedures in the block DEIM based on RRQR; at each iteration step, we compute a QR factorization with column pivoting on the transpose of a block of singular vectors to obtain the indices corresponding to the first $b$ columns. Then, we update the next block of vectors using the interpolatory projection technique in the DEIM algorithm (repeat these two steps until all indices are selected). A similar procedure is used in the block DEIM based on {\sf MaxVol}; the difference here is instead of using a column-pivoted QR decomposition, we use the {\sf MaxVol} method. 

Numerical experiments illustrate that the accuracy of a CUR factorization using the newly proposed block DEIM procedures is comparable to the classical DEIM, {\sf MaxVol}, and QDEIM schemes. The experiments also demonstrate that the block variants, regarding computational speed, may have an advantage over the standard DEIM and {\sf MaxVol} algorithms. Relative to the QDEIM algorithm, the B-DEIM-RRQR scheme sometimes yields lesser approximation errors while maintaining comparable runtimes. Using the B-DEIM-RRQR and B-DEIM-MaxVol methods, we have also illustrated how increasing the block size improves the speed of the algorithms but may not necessarily degrade the approximation quality significantly.
\Cref{tab:overview} displays a schematic overview of some properties of the various methods. A Matlab code of the proposed algorithms is available via \href{https://github.com/perfectyayra/Block-discrete-empirical-interpolation-methods}{github.com/perfectyayra}.

\section*{Acknowledgements} 
This work has received funding from the European Union's Horizon 2020 research and innovation
programme under the Marie Sk\l odowska-Curie grant agreement No 812912.
We thank the editor and the two expert referees for their very valuable suggestions.

\end{document}